\documentclass[externalize]{amsart}

\usepackage[T1]{fontenc}
\usepackage[utf8]{inputenc}
\usepackage{microtype}
\usepackage[colorlinks,citecolor=blue,urlcolor=blue]{hyperref}

\usepackage[backend=bibtex,doi=true,isbn=false,url=false,
            style=alphabetic,maxbibnames=99,maxalphanames=99]{biblatex}
\usepackage{amsthm,amsmath,amsfonts,amssymb}
\usepackage{mathtools}
\usepackage{nicefrac}
\usepackage{graphicx}
\graphicspath{{figures/}}
\usepackage[config, labelfont={normalsize}]{caption,subfig}
\usepackage[backgroundcolor=yellow]{todonotes}
\usetikzlibrary{decorations.pathreplacing}
\usetikzlibrary{decorations.markings}
\usetikzlibrary{decorations.pathmorphing}
\usetikzlibrary{patterns}
\usetikzlibrary{colorbrewer}

\usepackage{pgfplots}
\pgfplotsset{compat=1.18}
\usepgfplotslibrary{colorbrewer}

\makeatletter
\@ifclasswith{amsart}{externalize}%
{
  \usetikzlibrary{external}
  \tikzexternalize[
    optimize=false,
    prefix=tikz/,
    mode=list and make]
  \makeatletter
  \renewcommand{\todo}[2][]{\tikzexternaldisable\@todo[#1]{#2}\tikzexternalenable}
  \makeatother
}%
{
  \newcommand{\tikzexternaldisable}{}
  \newcommand{\tikzexternalenable}{}
}
\makeatother

\colorlet{colA}{Paired-B}   %
\colorlet{colB}{Paired-F}   %
\colorlet{colP}{Paired-J}   %

\usepackage{subfiles}
\usepackage{aliascnt}
\usepackage{enumitem}
\usepackage[capitalize,noabbrev]{cleveref}

\numberwithin{equation}{section}

\theoremstyle{definition}
\newtheorem{definition}{Definition}[section]

\theoremstyle{plain}
\newaliascnt{theorem}{definition}
\newtheorem{theorem}[theorem]{Theorem}
\aliascntresetthe{theorem}

\newaliascnt{proposition}{definition}
\newtheorem{proposition}[proposition]{Proposition}
\aliascntresetthe{proposition}

\newaliascnt{lemma}{definition}

\aliascntresetthe{lemma}

\newaliascnt{corollary}{definition}
\newtheorem{corollary}[corollary]{Corollary}
\aliascntresetthe{corollary}

\theoremstyle{remark}
\newaliascnt{example}{definition}
\newtheorem{example}[example]{Example}
\aliascntresetthe{example}

\newaliascnt{remark}{definition}
\newtheorem{remark}[remark]{Remark}
\aliascntresetthe{remark}

\newaliascnt{conjecture}{definition}

\aliascntresetthe{conjecture}

\crefname{theorem}{Theorem}{Theorems}
\Crefname{theorem}{Theorem}{Theorems}
\crefname{lemma}{Lemma}{Lemmas}
\Crefname{lemma}{Lemma}{Lemmas}
\crefname{corollary}{Corollary}{Corollaries}
\Crefname{corollary}{Corollary}{Corollaries}
\crefname{proposition}{Proposition}{Propositions}
\Crefname{proposition}{Proposition}{Propositions}
\crefname{definition}{Definition}{Definitions}
\Crefname{definition}{Definition}{Definitions}
\crefname{example}{Example}{Examples}
\Crefname{example}{Example}{Examples}
\crefname{remark}{Remark}{Remarks}
\Crefname{remark}{Remark}{Remarks}
\crefname{conjecture}{Conjecture}{Conjectures}
\Crefname{conjecture}{Conjecture}{Conjectures}

\newcommand{\ZZ}{\mathbb{Z}}
\newcommand{\RR}{\mathbb{R}}
\newcommand{\PP}{\mathbb{P}}
\newcommand{\EE}{\mathbb{E}}

\newcommand{\ZZhalf}{\ZZ'}

\newcommand{\gs}{\varepsilon}

\newcommand{\wallseq}{\mathbf{X}}   %
\newcommand{\heirseq}{\mathbf{Y}}   %
\newcommand{\wallpt}{\mathbf{x}}    %
\newcommand{\heirpt}{\mathbf{y}}    %

\title%
[Coalescing random walks via the coalescence determinant]%
{Coalescing random walks \\ via the coalescence determinant}

\author{Piotr \'Sniady}
\address{Institute of Mathematics, Polish Academy of Sciences,
         ul.~\'Sniadeckich 8, \mbox{00-656~Warszawa,} Poland}
\email{psniady@impan.pl}

\begin{document}

\begin{abstract}
When identical particles on a line collide, they merge
and continue as one. Exact determinantal formulas have
long been available for particles conditioned never to
collide, but collisions change the number of particles,
and exact distributions for the survivors have been
obtained only in specific settings and by ad hoc
methods. Building on the coalescence determinant
introduced in a companion paper, we study the
wall-particle system: when every site is initially
occupied, this is the joint system of survivors and the
boundaries between their basins of attraction. Its
finite-dimensional distributions are determinants of
block matrices built from transition probabilities and
their cumulative sums; a finite block matrix suffices
even when the initial configuration is infinite. As
applications, we recover the Rayleigh spacing density
and the joint distribution of consecutive
gaps---which are negatively correlated---by new methods,
and give a new derivation of the determinantal formula
for the joint CDF of finitely many coalescing particles
starting from fixed positions. All formulas hold for
arbitrary nearest-neighbor random walks and their
Brownian scaling limits, with no specific transition
kernels required.
\end{abstract}

\subjclass[2020]{Primary 60K35; Secondary 60J65, 15A15}

\keywords{coalescing random walks, coalescing Brownian motions,
    coalescence determinant, wall-particle system, skip-free process,
    basin boundaries, gap distribution, Rayleigh distribution,
    determinantal formula}

\maketitle

\section{Introduction}
\subsection{Coalescing particles}

When identical particles on a line collide, they merge
and continue as one (\Cref{fig:coalescence-intro}). This
coalescence rule appears in several areas of probability
and mathematical physics. In the voter
model~\cite{HolleyLiggett1975}, boundaries between
opinion clusters perform coalescing random walks; their
dynamics controls the approach to consensus. In
reaction-diffusion theory ($A + A \to A$), diffusing
particles that merge on contact display anomalous
kinetics: the density in one dimension decays as
$n(t) \sim t^{-1/2}$, slower than the mean-field
prediction, because spatial correlations dominate at
large times~\cite{DoeringBenAvraham1988}.

\begin{figure}[t]
\centering

\begin{tikzpicture}[scale=0.93, every node/.style={scale=0.93}]

\tikzset{
  surv/.style={black, line width=0.3pt},
  abso/.style={black, line width=0.3pt},
}

\begin{scope}
  \clip (-6.1, -0.05) rectangle (6.1, 7.05);
  \input{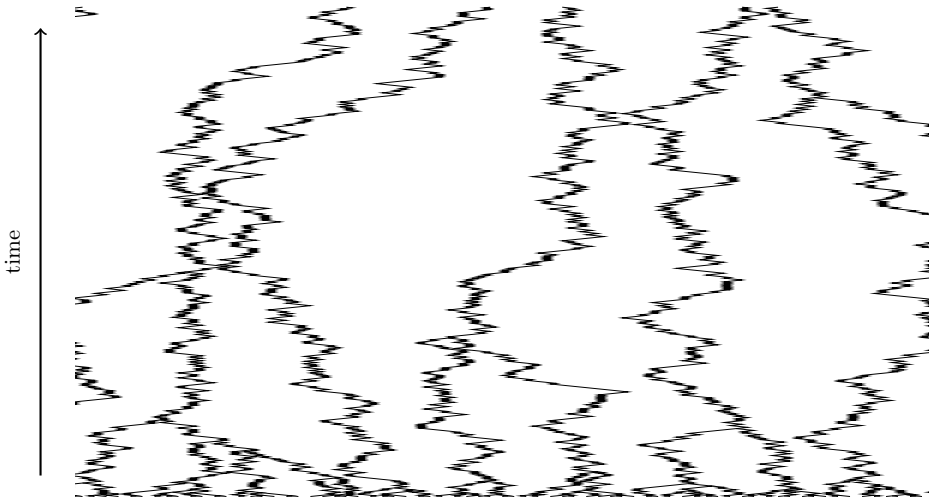}
\end{scope}

\draw[->, thick, black] (-6.6, 0.3) -- (-6.6, 6.7);
\node[rotate=90, black, font=\small] at (-7.0, 3.5) {time};

\end{tikzpicture}

\caption{Coalescing random walks starting from every site of a
lattice segment. Paths merge on contact; the surviving population
thins over time.}
\label{fig:coalescence-intro}
\end{figure}

Arratia~\cite{Arratia1979} placed the infinite
system on rigorous footing. Starting coalescing Brownian
motions from every point of~$\RR$, he showed that the
surviving population is locally finite at any positive
time: the system ``comes down from infinity.''

\subsection{The coalescence determinant}
\label{sec:intro-method}

The Karlin--McGregor theorem~\cite{KM1959} gives exact
determinantal formulas for particles that \emph{avoid}
collision. By contrast, exact formulas for coalescing
particles have been obtained only in specific settings
and by ad hoc methods. The difficulty is that collisions
change the number of particles, so the square matrices
of Karlin--McGregor do not directly apply.

\subsubsection{The formula}

The companion
paper~\cite[Section~\emph{Integrating out the ghosts}]{Sniady2026coalescence}
introduces the coalescing counterpart: the
\emph{coalescence determinant}.
Given a \emph{coalescence pattern}---which of the $n$
initial particles merge into each survivor---the
coalescence determinant gives the joint distribution of
survivor positions as the determinant of an
$n \times n$ matrix built from transition probabilities
and their cumulative sums
(\Cref{sec:coalescence-det} recalls the precise
definition).

Much of the power of the original Karlin--McGregor
theorem comes
from the weakness of its assumptions: the Markov property
and \emph{skip-free} trajectories (transitions only
to neighboring states, so that particles cannot change
order without first meeting~\cite{KM1959}). No symmetry and no
specific transition kernels are needed. The coalescence
determinant operates under exactly the same assumptions,
and therefore applies wherever Karlin--McGregor does:
we work with coalescing skip-free random walks
on~$\ZZ$ and their Brownian scaling limits on~$\RR$.

Concurrently and independently,
Urb\'an~\cite{Urban2025} proved the same formula for
binary coalescence of \emph{P\'olya walks}---a special case,
since P\'olya walks are birth-and-death chains. His
proof reaches the determinant from the opposite
direction, starting from Karlin--McGregor for
non-colliding walks and handling coalescence via a
total-probability decomposition;
see~\cite{Sniady2026coalescence} for a detailed
comparison.

\subsubsection{This paper}

This paper explores what the coalescence determinant
yields for general skip-free processes. A companion
paper~\cite{Sniady2026pfaffian} proves that the wall
process carries a natural Pfaffian structure (for any
skip-free process), derives explicit cumulants and a
central limit theorem for the wall count, and
transfers these results to surviving particles via
checkerboard duality. Under the maximal
entrance law (every site initially occupied), we study
the \emph{wall-particle system}: the joint system of
survivors and the walls between their basins of
attraction. The coalescence determinant yields the
finite-dimensional marginals of this system in closed
form (\Cref{thm:xy-correlation-intro}); gap
distributions follow by marginalizing over wall
positions. Separately,
for any finite initial
configuration, the coalescence determinant gives
Warren's determinantal CDF formula for survivor
positions (\Cref{thm:warren-general}).

\subsection{The wall-particle system}
\label{sec:wp-intro}

\subsubsection{The construction}

We work under the maximal entrance law: every site
of~$\ZZ$ is initially occupied (for Brownian motion,
every point of~$\RR$). Each surviving particle then
owns a \emph{basin}: the contiguous set of initial
positions whose particles merged into it. The basins partition the initial
configuration, and their boundaries are the
\emph{walls}. The joint system of wall positions
$\wallseq$ and survivor positions $\heirseq$---the
\emph{wall-particle system}
(\Cref{sec:xy-system}; see \Cref{fig:xy-system})---pairs
each survivor with its walls.

Previous work has studied the two marginals
separately. Arratia~\cite{Arratia1979} identified
the walls (his ``partition points''): the
points separating groups of initial positions that
merge into the same survivor. For Brownian motion,
he proved $\wallseq \stackrel{d}{=} \heirseq$ via a
time-reversal duality---but this distributional
identity requires specific symmetry of the underlying
process. Fomichov~\cite{Fomichov2016} computed the
joint distribution of one wall position and two
cluster values for the Arratia flow, using
Karlin--McGregor determinants; he noted that this
approach cannot recover the full joint distribution
for three or more clusters.
The wall-particle correlation function
(\Cref{thm:xy-correlation-intro}) generalizes
Fomichov's computation from $k = 2$ clusters to
arbitrary~$k$, and from the Arratia flow to any
skip-free process.

\begin{figure}[t]
\centering

\begin{tikzpicture}[scale=0.75]

\def\W{11.5}  %
\def\H{6.0}   %

\draw[gray, thick] (-0.3, 0) -- (\W, 0);
\draw[gray, thick] (-0.3, \H) -- (\W, \H);

\node[gray] at (-0.8, 0) {$\cdots$};
\node[gray] at (\W + 0.5, 0) {$\cdots$};
\node[gray] at (-0.8, \H) {$\cdots$};
\node[gray] at (\W + 0.5, \H) {$\cdots$};

\draw[->, thick, black] (-1.2, 0.4) -- (-1.2, \H - 0.4);
\node[rotate=90, black] at (-1.6, \H/2) {time};

\node[right, font=\small] at (\W + 0.9, 0) {$t = 0$};
\node[right, font=\small] at (\W + 0.9, \H) {$t > 0$};

\draw[black!50, line width=0.4pt]
  (0.0, 0.0) -- (0.38, 0.75) -- (0.0, 1.5) -- (0.38, 2.25);
\draw[black!50, line width=1.0pt]
  (0.38, 2.25) -- (0.75, 3.0) -- (1.12, 3.75) -- (1.5, 4.5);
\draw[black!50, line width=1.4pt]
  (1.5, 4.5) -- (1.12, 5.25) -- (1.5, 6.0);
\draw[black!30, line width=0.4pt]
  (0.75, 0.0) -- (1.12, 0.75) -- (0.75, 1.5) -- (0.38, 2.25);
\draw[black!30, line width=0.4pt]
  (1.5, 0.0) -- (1.12, 0.75);
\draw[black!30, line width=0.4pt]
  (2.25, 0.0) -- (2.62, 0.75) -- (2.25, 1.5) -- (2.62, 2.25)
  -- (2.25, 3.0) -- (1.88, 3.75) -- (1.5, 4.5);

\draw[colA, line width=0.4pt]
  (3.0, 0.0) -- (3.38, 0.75);
\draw[colA, line width=0.7pt]
  (3.38, 0.75) -- (3.75, 1.5) -- (4.12, 2.25);
\draw[colA, line width=1.4pt]
  (4.12, 2.25) -- (4.5, 3.0) -- (4.12, 3.75) -- (3.75, 4.5)
  -- (3.38, 5.25) -- (3.0, 6.0);
\draw[colA!40, line width=0.4pt]
  (3.75, 0.0) -- (3.38, 0.75);
\draw[colA!40, line width=0.4pt]
  (4.5, 0.0) -- (4.88, 0.75) -- (4.5, 1.5) -- (4.12, 2.25);
\draw[colA!40, line width=0.4pt]
  (5.25, 0.0) -- (4.88, 0.75);
\draw[colA!40, line width=0.4pt]
  (6.0, 0.0) -- (5.62, 0.75) -- (5.25, 1.5) -- (4.88, 2.25)
  -- (4.5, 3.0);

\draw[colB, line width=0.4pt]
  (6.75, 0.0) -- (7.12, 0.75) -- (7.5, 1.5) -- (7.88, 2.25);
\draw[colB, line width=1.4pt]
  (7.88, 2.25) -- (7.5, 3.0) -- (7.12, 3.75) -- (6.75, 4.5)
  -- (7.12, 5.25) -- (7.5, 6.0);
\draw[colB!40, line width=0.4pt]
  (7.5, 0.0) -- (7.88, 0.75) -- (8.25, 1.5) -- (7.88, 2.25);
\draw[colB!40, line width=0.4pt]
  (8.25, 0.0) -- (8.62, 0.75) -- (8.25, 1.5);
\draw[colB!40, line width=0.4pt]
  (9.0, 0.0) -- (8.62, 0.75);

\draw[black!50, line width=0.4pt]
  (9.75, 0.0) -- (10.12, 0.75);
\draw[black!50, line width=0.7pt]
  (10.12, 0.75) -- (10.5, 1.5) -- (10.12, 2.25)
  -- (9.75, 3.0) -- (10.12, 3.75) -- (10.5, 4.5)
  -- (10.88, 5.25) -- (11.25, 6.0);
\draw[black!30, line width=0.4pt]
  (10.5, 0.0) -- (10.12, 0.75);

\foreach \px in {0.00, 0.75, 1.50, 2.25, 3.00, 3.75, 4.50,
                 5.25, 6.00, 6.75, 7.50, 8.25, 9.00, 9.75, 10.50} {
  \fill (\px, 0) circle (1.5pt);
}

\draw (5.25, -0.16) -- (5.25, 0.24);
\node[below=6pt, font=\small] at (5.25, -0.16) {$0$};

\foreach \bx in {2.62, 6.38, 9.38} {
  \fill (\bx, 0.30) -- ++(-0.22, -0.42) -- ++(0.44, 0) -- cycle;
}
\node[below=6pt, font=\small] at (2.62, -0.16) {$\wallpt_{-\nicefrac{1}{2}}$};
\node[below=6pt, font=\small] at (6.38, -0.16) {$\wallpt_{\nicefrac{1}{2}}$};
\node[below=6pt, font=\small] at (9.38, -0.16) {$\wallpt_{\nicefrac{3}{2}}$};

\foreach \hx in {1.50, 3.00, 7.50, 11.25} {
  \fill (\hx, \H) circle (2.5pt);
}
\node[above=4pt] at (1.50, \H) {$\heirpt_{-1}$};
\node[above=4pt] at (3.00, \H) {$\heirpt_0$};
\node[above=4pt] at (7.50, \H) {$\heirpt_1$};
\node[above=4pt] at (11.25, \H) {$\heirpt_2$};

\draw[decorate, decoration={brace, amplitude=3pt, mirror}]
  (-0.2, -1.05) -- (2.52, -1.05)
  node[midway, below=4pt, font=\footnotesize]
  {basin of $\heirpt_{-1}$};
\draw[decorate, decoration={brace, amplitude=3pt, mirror}]
  (2.72, -1.05) -- (6.28, -1.05)
  node[midway, below=4pt, font=\footnotesize]
  {basin of $\heirpt_0$};
\draw[decorate, decoration={brace, amplitude=3pt, mirror}]
  (6.48, -1.05) -- (9.28, -1.05)
  node[midway, below=4pt, font=\footnotesize]
  {basin of $\heirpt_1$};
\draw[decorate, decoration={brace, amplitude=3pt, mirror}]
  (9.48, -1.05) -- (\W + 0.2, -1.05)
  node[midway, below=4pt, font=\footnotesize]
  {basin of $\heirpt_2$};

\end{tikzpicture}

\caption{The $(\wallseq, \heirseq)$ system for coalescing random
walks. Paths coalesce on meeting; line weight increases with
each merger. Bottom: walls
$\wallseq = (\ldots, \wallpt_{-\nicefrac{1}{2}},
\wallpt_{\nicefrac{1}{2}}, \wallpt_{\nicefrac{3}{2}}, \ldots)$
(triangles) partition the initial line, with
$\wallpt_{-\nicefrac{1}{2}} < 0 \leq \wallpt_{\nicefrac{1}{2}}$.
Top: survivor positions
$\heirseq = (\ldots, \heirpt_{-1}, \heirpt_0, \heirpt_1,
\heirpt_2, \ldots)$ (circles), one per basin.}
\label{fig:xy-system}
\end{figure}

\subsubsection{Correlation function}

The coalescence determinant gives the
finite-dimensional marginals of $(\wallseq, \heirseq)$
in closed form.

\begin{theorem}[Wall-particle correlation function]
\label{thm:xy-correlation-intro}
Consider a coalescing skip-free process on~$\ZZ$ with
every site initially occupied. The probability that
$(\wallseq, \heirseq)$ contains the consecutive pattern
\[
  y_0 \nwarrow x_{\nicefrac{1}{2}} \nearrow y_1
  \nwarrow \cdots
  \nwarrow x_{k-\nicefrac{1}{2}} \nearrow y_k
\]
(walls at $x_{\nicefrac{1}{2}}, \ldots,
x_{k-\nicefrac{1}{2}}$ flanked by survivors at
$y_0, \ldots, y_k$) equals $\det(\tilde{M})$, where
$\tilde{M}$ is the $2k \times 2k$ block matrix described
in \Cref{sec:finite-marginals}.
\end{theorem}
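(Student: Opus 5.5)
The plan is to reduce the infinite-configuration event to a finite coalescence pattern and then apply the coalescence determinant of \cite{Sniady2026coalescence} (recalled in \Cref{sec:coalescence-det}). On $\ZZ$ with every site occupied, put walls at half-integers. The event that $y_0 \nwarrow x_{\nicefrac12}\nearrow y_1 \nwarrow\cdots\nearrow y_k$ occurs consecutively in $(\wallseq,\heirseq)$ says three things. First, for each $j=1,\dots,k$, the particle started at $x_{j-\nicefrac12}-\nicefrac12$ ends at $y_{j-1}$ and the particle started at $x_{j-\nicefrac12}+\nicefrac12$ ends at $y_j$. Second, $y_{j-1}<y_j$, so these two particles have not merged. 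Third, all particles started strictly between two consecutive walls end at the same site; this condition is automatic. Indeed, because the process is skip-free and coalescing, the flow is order-preserving. So the particle from $x_{j-\nicefrac12}+\nicefrac12$ and the particle from $x_{j+\nicefrac12}-\nicefrac12$ both landing at $y_j$ forces everything in between to land at $y_j$. It also forces no further wall strictly between them. The event is therefore a statement about only $2k$ tracked particles, at the starting points $x_{j-\nicefrac12}\pm\nicefrac12$, with a prescribed coalescence pattern: consecutive pairs across a wall stay separate, while the right particle of wall $j$ and the left particle of wall $j+1$ merge (into $y_j$). The presence of the other infinitely many particles is irrelevant by monotonicity of the flow. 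This explains why a finite block matrix suffices even though the initial configuration is infinite.

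Next I would write this probability with the coalescence determinant for $n=2k$ initial particles. The pattern has $k+1$ survivors. The outer survivors $y_0$ and $y_k$ each receive a single tracked particle, and each inner $y_j$ with $1\le j\le k-1$ receives a merged pair. One issue is that the coalescence determinant as recalled is indexed by the full set of particles merging into each survivor. Here only the boundary particles of each basin are tracked, and each basin is an interval. Pairs merging into $y_j$ are adjacent, so "$a$ and $b$ coalesce by time $t$ at $y$" is exactly the pattern the determinant handles. The $2\times2$ blocks of $\tilde M$ then arise naturally: each wall $x_{j-\nicefrac12}$ contributes one pair of rows, indexed by its two neighbouring start points. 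Each survivor contributes columns, with transition probabilities $p_t(\cdot,y_j)$ in the column for a singly-arriving particle. For the pair merging into $y_j$, the columns use the cumulative sums $\sum_{z\le y}p_t(\cdot,z)$ evaluated at $y_j$ and $y_j-1$ (or their differences), as the coalescence determinant prescribes. I would match these entries with the block description in \Cref{sec:finite-marginals}, doing row and column operations to put it in the stated form.

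The main obstacle is the justification that the event for the tracked $2k$ particles equals the event in the infinite system, together with the careful matching of the coalescence-determinant conventions. The convention that needs care is whether the outer survivors $y_0,y_k$ are "free" singletons or must be the images of specific start points. They are images of specific start points, since the basin of $y_0$ ends at wall $x_{\nicefrac12}$. I would first verify the case $k=1$ by hand as a sanity check. In that case the probability should be a $2\times2$ Karlin--McGregor determinant $\det[p_t(x\mp\nicefrac12,y_{0}),\,p_t(x\mp\nicefrac12,y_1)]$ for non-collision. After that I would handle the order-preservation argument rigorously, using that skip-free coalescing paths cannot cross.
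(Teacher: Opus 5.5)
Your proposal is correct and follows the paper's proof. Like the paper, you restrict to the $2k$ flanking particles $a_1,b_1,\ldots,a_k,b_k$ with pattern $1{+}2{+}\cdots{+}2{+}1$, use skip-free order preservation to trap the intermediate particles between $b_l$ and $a_{l+1}$, and apply the coalescence determinant; its matrix is $\tilde{M}$ by definition, so your planned row and column matching is unnecessary.

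One attribution to fix: monotonicity only shows that the event is determined by the $2k$ tracked paths. That the other particles leave the law of the tracked ones unchanged comes from the consistency of the coalescing construction, which the paper invokes explicitly via Arratia: added particles follow the same underlying paths and never perturb existing trajectories.
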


The formula holds for any skip-free process---no
symmetry of the transition probabilities and no specific
kernels are needed. Despite the infinite initial
configuration (every site occupied), $k$~consecutive
wall-particle pairs depend only on a $2k \times 2k$
block matrix; the rest of the infinite system does not
enter the formula.

\subsection{Gap distributions}
\label{sec:intro-results}

Marginalizing the correlation function over wall
positions gives the joint distribution of consecutive
gaps between survivors. Gap distributions for coalescing
Brownian motions were first studied by Doering and
ben-Avraham~\cite{DoeringBenAvraham1988} via the
\emph{inter-particle distribution function} (IPDF)
method: the rescaled nearest-neighbor distance
density converges to $x\, e^{-cx^2}$, a Rayleigh form.
Ben-Avraham~\cite{benAvraham1998} extended the method to
derive the full hierarchy of empty-interval
probabilities; ben-Avraham and
Brunet~\cite{benAvrahamBrunet2005} extracted explicit
densities for two and three consecutive spacings.
These formulas use the explicit transition kernel
of Brownian motion.
In the asymptotic regime, FitzGerald, Tribe, and
Zaboronski~\cite{FitzGeraldTZ2020,FitzGeraldTZ2022}
computed gap exponents and persistence exponents via
Fredholm Pfaffian methods, rigorously confirming
predictions of Derrida and Zeitak.
For counting statistics, Glinyanaya and
Fomichov~\cite{GlinyanayaFomichov2018} proved a
central limit theorem for the number of surviving
clusters in the Arratia flow, with Fano factor
$3 - 2\sqrt{2} \approx 0.172$, reflecting the
sub-Poissonian correlations induced by coalescence.

\subsubsection{Discrete gap formula}

In the discrete setting, no limiting procedure is needed.

\begin{theorem}[Discrete gap intensity measure]
\label{thm:discrete-gap-intro}
Let $P_s(n)$ denote the transition probability of a
translation-invariant skip-free process on~$\ZZ$ (from~$0$
to~$n$ in time~$s$), and suppose the process is symmetric:
$P_s(n) = P_s(-n)$. Start with every site occupied. 

The gap
intensity measure---the expected number of gaps of size~$g$
per unit length---is
\[
\mu(\{g\}) = P_{2T}(g - 1) - P_{2T}(g + 1),
\qquad g = 1, 2, 3, \ldots
\]

The total intensity
\[\sum_g \mu(\{g\}) = P_{2T}(0) + P_{2T}(1)\]
is the survivor
density per site; dividing by it recovers the gap probability
mass function (\Cref{thm:discrete-gap-body}).
\end{theorem}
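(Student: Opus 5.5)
The plan is to specialize \Cref{thm:xy-correlation-intro} to $k=1$ and sum out the wall. A gap of size $g$ between consecutive survivors is exactly the pattern $y_0 \nwarrow x_{\nicefrac12} \nearrow y_1$ with $y_1 = y_0+g$ and a single wall $x_{\nicefrac12}$ somewhere between them in the initial line. By translation invariance, the gap intensity per unit length is obtained by fixing $y_0=0$, $y_1=g$ and summing over the wall position:
\[
\mu(\{g\}) = \sum_{x \in \ZZ+\nicefrac12} \det \tilde M(0, x, g),
\]
where $\tilde M$ is the $2\times 2$ block matrix of \Cref{sec:finite-marginals}. I expect each entry of $\tilde M$ to be either a one-step transition probability $P_T(y - (x\pm\nicefrac12))$ or a cumulative sum of such probabilities. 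The cumulative sums encode the requirement that the whole basin on one side of the wall ends at the given survivor. Expanding the determinant then gives a short signed sum of products, each of the form ``transition probability $\times$ cumulative sum''.

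The key step is the sum over $x$. In each product, the two factors involve $P_T(y_0 - x')$ and $P_T(y_1 - x'')$ with $x',x''$ adjacent to $x$. Using symmetry $P_T(n)=P_T(-n)$, one rewrites $P_T(y_0-x')=P_T(x'-y_0)$. The Chapman--Kolmogorov equation
\[
\sum_{z} P_T(z-a)\,P_T(b-z) = P_{2T}(b-a)
\]
then collapses each sum over the wall position into a single time-$2T$ transition probability. This is precisely where $2T$ and the symmetry hypothesis enter. For the terms with a cumulative-sum factor, I will first exchange the order of summation: sum over the wall first, then over the cumulative index. This produces cumulative sums of $P_{2T}$ evaluated at arguments differing by one. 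The half-integer offsets $\pm\nicefrac12$ of the wall then make adjacent terms cancel in a telescoping fashion. What should remain is $P_{2T}(g-1) - P_{2T}(g+1)$, coming from the two neighbours $x\pm\nicefrac12$ flanking the wall.

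Finally, the total intensity is a telescoping sum:
\[
\sum_{g\ge1}\bigl(P_{2T}(g-1)-P_{2T}(g+1)\bigr) = P_{2T}(0)+P_{2T}(1),
\]
since $P_{2T}(n)\to0$. Survivors and gaps are in bijection, so this is the survivor density per site. Dividing by it gives the pmf, as stated via \Cref{thm:discrete-gap-body}.

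The main obstacle I expect is the bookkeeping in the expansion of $\det\tilde M$. One must correctly identify which entries are cumulative sums and in which direction they run, and track the signs, so that after Chapman--Kolmogorov the boundary terms cancel to leave exactly two terms. Interchanging the infinite sums also needs justification. For that I would use absolute convergence: the cumulative sums are bounded by $1$, and $\sum_x P_T(\cdot)=1$. If the direct expansion turns out messy, my fallback is a two-step route: first compute $\sum_x$ of the determinant with $g$ free, obtaining a ``survivor at $0$, next survivor at $\ge g$'' type quantity, then take a difference in $g$.
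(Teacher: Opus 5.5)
Your skeleton matches the paper's proof: specialize to $k=1$, sum the pattern probability over all wall positions, collapse with symmetry and Chapman--Kolmogorov, then telescope for the total intensity. However, you have misidentified the matrix, and the step you call ``key'' rests on that error.

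For $k=1$ the coalescence pattern in \Cref{thm:xy-correlation} is $1{+}1$: $a$ survives alone at $y_0$ and $b=a+1$ survives alone at $y_1$, with no coalescence at all. Both columns are first-in-block, so $\tilde{M}$ has no cumulative-sum entries. It is the $2\times 2$ Karlin--McGregor determinant, as noted in \Cref{ex:schema-1+2+1}:
\[
\det\tilde{M} = P(a,y_0)\,P(a+1,y_1) - P(a,y_1)\,P(a+1,y_0).
\]
Cumulative-sum columns arise only for interior survivors formed by a merging pair $(b_l,a_{l+1})$, hence only for $k\ge 2$. The requirement that a whole basin flows into its survivor is not encoded by $F$-entries. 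It is handled by the trapping argument in the proof of \Cref{thm:xy-correlation}, which shows that the unflanked sites do not change the event.

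So the exchange of summation order for ``cumulative-sum terms'' and the telescoping cancellation of boundary terms have nothing to act on, and the bookkeeping obstacle you anticipate is illusory. If you actually inserted an $F$-type column into a $2\times 2$ matrix, you would be computing a different event (for instance the merger probability of pattern $2$), not the gap intensity. Your fallback of differencing in $g$ is likewise unnecessary.

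With the correct matrix, your own Chapman--Kolmogorov step finishes the proof in two lines. Setting $y_0=0$, $y_1=g$ and using $P_T(-n)=P_T(n)$,
\[
\sum_{a} P_T(-a)\,P_T(g-1-a) = P_{2T}(g-1),
\qquad
\sum_{a} P_T(g-a)\,P_T(-a-1) = P_{2T}(g+1).
\]
Both sums converge absolutely, each being at most $1$, so there is no interchange issue and no cancellation is needed. This is exactly the paper's argument. The paper first writes the two sums as autocorrelations $R(g\mp 1)$ with $R(m)=\sum_s P_T(s)P_T(s+m)$, which is valid for any translation-invariant kernel. It invokes symmetry only to identify $R(m)=P_{2T}(m)$, so translation invariance and symmetry are cleanly separated.

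Your telescoping of the total intensity also matches the paper. One minor point: walls are time-$0$ coordinates and survivors are time-$T$ coordinates, so the wall need not lie spatially between $y_0$ and $y_1$. The sum must therefore run over all of $\ZZ+\nicefrac{1}{2}$, which your formula in fact does.
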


This formula holds for any symmetric skip-free process:
simple random walk, lazy random walk, or any birth-death
chain with symmetric rates. The only inputs are
transition probabilities at doubled time---no PDEs, no
passage to a continuous limit.
For non-symmetric processes, the gap intensity is a
convolution of transition probabilities at the original
time (\Cref{sec:discrete-single-gap}).

\subsubsection{Brownian gaps: the Rayleigh distribution}

Passing to Brownian motion, the discrete formula
becomes a continuous density.

\begin{theorem}[Single gap intensity measure]
\label{thm:rayleigh-intro}
Under the maximal entrance law, the gap intensity measure in
rescaled coordinates ($G = \text{gap}/\sqrt{T}$) has density
\[
\mu(dG) = \frac{G}{2\sqrt{\pi}}\, e^{-G^2/4}\, dG,
\qquad G > 0.
\]
The total intensity
\[\int_0^\infty \mu(dG) = \frac{1}{\sqrt{\pi}}\]
gives the rescaled survivor density; un-rescaling yields
density $n(T) = 1/\sqrt{\pi T}$. Normalizing to a probability
distribution gives $\mathrm{Rayleigh}(\sqrt{2})$.
\end{theorem}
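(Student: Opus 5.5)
We derive \Cref{thm:rayleigh-intro} as a diffusive scaling limit of \Cref{thm:discrete-gap-intro}. We apply the discrete formula to a symmetric skip-free walk whose diffusive limit is standard Brownian motion. Concretely, take the lazy/continuous-time simple random walk on~$\ZZ$ run at rate one, so that $P_s(n)$ is the law of a walk with variance $s$ at time $s$. For lattice spacing $\varepsilon$ we rescale space by $\varepsilon$ and time by $\varepsilon^2$, so macroscopic time $T$ corresponds to lattice time $T/\varepsilon^2$ and a macroscopic gap $\gamma$ to $g=\gamma/\varepsilon$ lattice sites. \Cref{thm:discrete-gap-intro} then gives the intensity per site
\[
  \mu_\varepsilon(\{g\}) = P_{2T/\varepsilon^2}(g-1)-P_{2T/\varepsilon^2}(g+1).
\]

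The heart of the computation is a local central limit theorem with a derivative. Write $P_{s}(g-1)-P_s(g+1) = -2\,\partial_n P_s(g) + o(\cdot)$ and use
\[
  P_s(n)\approx \frac{1}{\sqrt{2\pi s}}\,e^{-n^2/(2s)}.
\]
Put $s=2T/\varepsilon^2$ and $n=\gamma/\varepsilon$. Then
\[
  -2\,\partial_n P_s(n) = \frac{2n}{s}\,\frac{1}{\sqrt{2\pi s}}\,e^{-n^2/(2s)}
  = \frac{\gamma\varepsilon^2}{T}\cdot\frac{\varepsilon}{2\sqrt{\pi T}}\,e^{-\gamma^2/(4T)}.
\]
Next convert to macroscopic quantities. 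Intensity per unit macroscopic length is per-site intensity divided by $\varepsilon$, and summing over $g$ becomes integrating with $d\gamma=\varepsilon\,dg$, so the per-$g$ mass is converted to a density in $\gamma$ by a further division by $\varepsilon$. The net result is a gap intensity density per unit length
\[
  \frac{\gamma}{2\sqrt{\pi}\,T^{3/2}}\,e^{-\gamma^2/(4T)}\,d\gamma .
\]
Finally pass to the rescaled coordinate $G=\gamma/\sqrt{T}$, with intensity per unit rescaled length, i.e. per length $\sqrt T$. Multiplying by $\sqrt{T}\cdot\sqrt{T}$ yields $\frac{G}{2\sqrt\pi}e^{-G^2/4}\,dG$, as claimed.

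The remaining assertions are direct computations. First, $\int_0^\infty \frac{G}{2\sqrt\pi}e^{-G^2/4}\,dG=\frac{1}{\sqrt\pi}$. As a consistency check, the discrete total intensity $P_{2T}(0)+P_{2T}(1)\approx 2/\sqrt{4\pi T/\varepsilon^2}$ per site, divided by $\varepsilon$, gives $1/\sqrt{\pi T}$, matching the stated density $n(T)$. Second, normalizing by $1/\sqrt\pi$ gives the density $\frac{G}{2}e^{-G^2/4}$, which is the Rayleigh density $\frac{G}{\sigma^2}e^{-G^2/(2\sigma^2)}$ with $\sigma=\sqrt2$.

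The main obstacle is rigor in the limit. We need a local CLT for the discrete gradient $P_s(n-1)-P_s(n+1)$, uniform in $n$ on compact macroscopic sets, together with tightness of the tails so that the measures converge as measures. We also need the discrete wall-particle system to converge to the Brownian one under the maximal entrance law. The latter could be bypassed by instead applying the Brownian version of the coalescence-determinant formula directly, which yields $-2\partial_x p_{2T}(x)$ in place of the difference quotient. For the gradient local CLT I would use the standard Fourier-integral representation of $P_s(n)$, differentiating under the integral; this gives an error $O(s^{-3/2})$ relative to the main term.
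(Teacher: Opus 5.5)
Your lattice computation reaches the right density, and your total intensity, $\sqrt{T}$-rescaling and Rayleigh normalization agree with the paper. As a proof, however, the main route has a genuine gap. It is the gap the paper itself points to in \Cref{sec:scaling-preview}, where this very computation is labelled a scaling heuristic. \Cref{thm:discrete-gap-intro} is a statement about coalescing walks on $\ZZ$, while the theorem concerns the Brownian maximal entrance law constructed in \Cref{sec:maximal}. The gradient local CLT, which is the routine part, only shows that the rescaled \emph{lattice} gap intensities converge to $\frac{\gamma}{2\sqrt{\pi}\,T^{3/2}}e^{-\gamma^2/(4T)}$. To identify this limit with the Brownian gap intensity, you must prove that coalescing walks started from every site, diffusively rescaled, converge to coalescing Brownian motions started from all of $\RR$. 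The convergence must be strong enough that \emph{expected} numbers of consecutive survivor pairs with gap in a given interval converge. That requires an invariance principle for the entire infinite coalescing system, almost sure continuity of the consecutive-pair functional at the limiting configuration, and uniform integrability of survivor counts in bounded intervals. You name this step but do not supply it, and all the difficulty sits there.

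The paper never leaves Brownian dynamics. It refines only the \emph{starting grid}, and the limit $\varepsilon\to 0$ is handled inside \Cref{thm:bm-density} by the monotone dyadic construction, so no random-walk invariance principle is needed. By \Cref{thm:bm-density} with $k=1$, the intensity of $y_0\nwarrow x\nearrow y_1$ is the $2\times 2$ determinant with rows $(p_x(y_0),\,p_x(y_1))$ and $(\partial_x p_x(y_0),\,\partial_x p_x(y_1))$. This equals $p_x(y_0)\,p_x(y_1)\,(y_1-y_0)/T$. Integrating over the wall offset $u=y_0-x\in\RR$ by completing the square gives $\frac{G}{2\sqrt{\pi}\,T^{3/2}}e^{-G^2/(4T)}$ per unit length, and rescaling finishes. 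This is exactly the ``bypass'' in your last paragraph, and it should be the proof rather than a fallback. Carried out, it is exact with no error terms: a change of variables and one integration by parts give $\int_{\RR}\det M_0\,dx=-2R'(G)$ with $R(m)=\int_{\RR}p_0(s)\,p_0(s+m)\,ds$. By symmetry and Chapman--Kolmogorov, $R$ is the centred Gaussian density of variance $2T$, which recovers your $-2\partial_x p_{2T}$.

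Two smaller points. First, in your display $2n/s=\gamma\varepsilon/T$, not $\gamma\varepsilon^2/T$; as written, your two divisions by $\varepsilon$ would give $0$. Second, you should commit to the continuous-time walk with rate $\tfrac12$ in each direction. A lazy nearest-neighbour step has variance below $1$, and in discrete time two particles at adjacent sites can swap in one step without meeting.
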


This recovers the Rayleigh law of Doering and
ben-Avraham~\cite{DoeringBenAvraham1988} by new methods
(\Cref{sec:gaps}). In \Cref{sec:scaling-preview} we
sketch how the discrete formula from
\Cref{thm:discrete-gap-intro} converges to this density
under diffusive scaling.

\subsubsection{Joint gap distribution}

The single-gap Rayleigh law determines the marginal
distribution but says nothing about correlations between
neighboring gaps.

\begin{theorem}[Joint gap intensity]
\label{thm:joint-intro}
For two consecutive gaps $G_1$ and $G_2$, the joint gap
intensity $h(G_1, G_2)$ is given by an explicit integral formula
(\Cref{thm:joint-gap}). The marginal intensities are each
$\mathrm{Rayleigh}(\sqrt{2})$, but the gaps are negatively
correlated ($\rho \approx -0.163$).
\end{theorem}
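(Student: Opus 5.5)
We would derive the joint gap intensity directly from the wall-particle correlation function of \Cref{thm:xy-correlation-intro} with $k=2$. Two consecutive gaps correspond to three consecutive survivors $y_0<y_1<y_2$, separated by two walls $x_{\nicefrac12}$ and $x_{\nicefrac32}$. In the discrete setting the first step is to write the probability of the pattern $y_0 \nwarrow x_{\nicefrac12} \nearrow y_1 \nwarrow x_{\nicefrac32} \nearrow y_2$ as $\det\tilde M$, where $\tilde M$ is the $4\times 4$ block matrix built from $P_T$ and its cumulative sums. We then sum over the wall positions $x_{\nicefrac12}<x_{\nicefrac32}$ and over a common translation, fixing the gaps $g_1=y_1-y_0$ and $g_2=y_2-y_1$. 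Translation invariance reduces this to a sum over the two walls relative to $y_0$; the result is the discrete joint gap intensity. This mirrors the single-gap computation of \Cref{thm:discrete-gap-intro} (the $k=1$ case), where summing a $2\times2$ determinant over one wall collapsed, by symmetry and the Chapman--Kolmogorov relation, to transition probabilities at time $2T$.

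Next we pass to the Brownian limit under diffusive scaling, as sketched in \Cref{sec:scaling-preview}. Sums over walls become integrals, the cumulative sums become Gaussian distribution functions $\Phi$, and the entries become Gaussian densities. Setting $T=1$ after rescaling, we obtain $h(G_1,G_2)$ as a double integral over the two rescaled wall positions of a $4\times4$ determinant of Gaussian densities and CDFs. Expanding the determinant along its block structure, several integrals can be carried out in closed form (integrals of $\varphi\cdot\Phi$ products over a half-line give $\Phi$ at rescaled arguments). This should leave an explicit integral formula, perhaps a single remaining integral. That formula is \Cref{thm:joint-gap}.

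For the marginals, integrate $h(G_1,G_2)$ over $G_2$. Probabilistically this forgets the third survivor, so the result must coincide with the single-gap intensity of \Cref{thm:rayleigh-intro}, namely $\frac{G}{2\sqrt\pi}e^{-G^2/4}$. We would verify this either analytically from the formula or, more cleanly, by consistency: summing the $k=2$ pattern probability over $y_2$ and $x_{\nicefrac32}$ must reproduce the $k=1$ pattern probability. By the reflection symmetry of Brownian motion the same holds for $G_1$.

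The negative correlation needs $\EE[G_1G_2]$ under the normalized density $\sqrt\pi\, h$, compared with $\EE[G]^2=\pi$ and $\mathrm{Var}(G)=4-\pi$ for $\mathrm{Rayleigh}(\sqrt2)$. Computing $\iint G_1G_2\,h\,dG_1\,dG_2$ is the main obstacle. The determinant expansion produces Gaussian integrals with $\Phi$-factors that may not all close in elementary form. We would first try to exchange the order of integration so that the moment integrals over $G_1,G_2$ are performed before those over the walls, turning them into Gaussian moments. Failing a closed form, we would evaluate the remaining one- or two-dimensional integral numerically to certified accuracy, yielding $\rho=(\EE[G_1G_2]-\pi)/(4-\pi)\approx-0.163$.
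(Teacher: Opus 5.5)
Apart from one step, your plan follows the paper: the $k=2$ pattern and its $4\times4$ determinant, survivors pinned at $0$, $G_1$, $G_1+G_2$ by translation invariance, integration over the rescaled wall positions with $u<v$, and numerical evaluation of $\rho$.

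Note that the ``explicit integral formula'' of \Cref{thm:joint-gap} is just $\iint_{u<v}\det M_0\,du\,dv$. No further reduction is needed. The paper records a closed form for $\rho$, and an analytic proof of $\rho<0$, as open problems, so only your numerical fallback is realistic.

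Your consistency argument for the marginals is correct, and it supplies a justification the paper only asserts. Under the maximal entrance law the survivor set is a nonempty stationary point process, hence a.s.\ unbounded in both directions. So integrating $(x_{\nicefrac32},y_2)$ out of the $k=2$ intensity returns the $k=1$ intensity of \Cref{thm:gap-measure}.

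The gap is your route to the Brownian formula. You start from random walks and pass to the limit under diffusive scaling ``as sketched in \Cref{sec:scaling-preview}'', but the paper explicitly labels that sketch a heuristic, not a proof. Turning it into a proof would require two things your plan does not contain:
\begin{enumerate}
\item an invariance principle for coalescing random walks started from every site, holding jointly for the time-$0$ basin boundaries and the time-$T$ survivors;
\item convergence of the pattern intensities at the level of densities, i.e.\ local-limit control of the $4\times4$ determinants summed over wall positions, including their tails.
\end{enumerate}

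The paper avoids this altogether. The coalescence determinant holds directly for Brownian motion, so one starts Brownian motions on a grid of spacing $\gs$. Subtracting the $a_l$ row from the $b_l$ row and dividing by $\gs$ gives $\det\tilde M=\gs^2\det M_0+O(\gs^3)$ (\Cref{prop:scaling-limit}). The dyadic construction of \Cref{sec:maximal} then produces a monotonically growing, locally finite survivor set that stabilizes on bounded intervals, which identifies $\det M_0$ as the intensity under the maximal entrance law (\Cref{thm:bm-density}). With that substitution, the rest of your argument goes through.
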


The joint density of two consecutive spacings was
previously obtained by ben-Avraham and
Brunet~\cite{benAvrahamBrunet2005} from the IPDF
hierarchy; we give an alternative derivation via a
$4 \times 4$ determinant.

\subsection{Warren's formula}

Warren~\cite{Warren2007} proved that for finitely many
coalescing Brownian motions, the joint CDF of survivor
positions is a determinant of Gaussian CDFs and their
tails. Assiotis, O'Connell, and
Warren~\cite{AssiotisOConnellWarren2019} extended this
to general one-dimensional diffusions, and
Assiotis~\cite{Assiotis2018,Assiotis2023} to
birth-death chains. Unlike the wall-particle correlation
function---which requires the maximal entrance law
(every site occupied)---Warren's formula applies to any
finite configuration of particles at fixed starting
positions. The coalescence determinant yields the CDF
formula for any skip-free process, including
discrete-time random walks on~$\ZZ$
(\Cref{thm:warren-general}): the proof uses only the
coalescence determinant and a summation-by-parts
identity.

\subsection{Scope}

The coalescence determinant and the analytic
approaches cover complementary territory. Our
formulas apply to any skip-free process with
arbitrary inhomogeneous transition probabilities,
but the wall-particle results require the maximal
entrance law and treat only pure coalescence.
The spin-pair duality
of~\cite{GarrodPTZ2018} handles mixed
coalescence-annihilation and all deterministic
initial conditions (extended by Tribe and
Zaboronski~\cite{TribeZaboronski2026} to all
entrance laws), but requires a time-homogeneous
Markov generator.
For coalescing Brownian motions, FitzGerald, Tribe,
and
Zaboronski~\cite{FitzGeraldTZ2020,FitzGeraldTZ2022}
derive sharp gap exponents via Fredholm Pfaffian
methods, and Glinyanaya and
Fomichov~\cite{GlinyanayaFomichov2018} prove a CLT
with Berry--Esseen bound---results our approach does
not yield.

\subsection{Organization}

\Cref{sec:coalescence-det} recalls the coalescence
determinant for arbitrary patterns.
\Cref{sec:xy-system} develops the wall-particle
system and derives its correlation function for skip-free
processes on~$\ZZ$. Specializing to Brownian
motion (\Cref{sec:bm-setting}), each pair of flanking
sites collapses to a density--derivative pair as the
lattice spacing vanishes, and the maximal entrance law
provides translation invariance. Gap distributions
(\Cref{sec:gap-distributions}) follow by marginalizing
the correlation function over wall positions for
$k = 1$ and $k = 2$. Warren's formula
(\Cref{sec:warren}) is a separate application of the
coalescence determinant: summing over all coalescence
patterns converts density columns to CDF columns.

\section{The Coalescence Determinant}
\label{sec:coalescence-det}

We recall the coalescence determinant
from~\cite{Sniady2026coalescence}, which applies to any
finite collection of coalescing skip-free particles.
We state the formula in the discrete setting; the
continuous extension is recalled in
\Cref{sec:bm-setting}.

Write $P(x, y)$ for the transition probability of the
underlying skip-free process from~$x$ to~$y$ in time~$T$
(with~$T$ fixed throughout), and
\[
F(x, y) = \sum_{z \leq y} P(x, z)
\]
for the cumulative sum.

Start $n$ particles at positions
$x_1 < x_2 < \cdots < x_n$. A \emph{coalescence pattern}
is an integer composition $n_1 + n_2 + \cdots + n_k = n$:
the first~$n_1$ initial particles merge into
survivor~$1$, the next~$n_2$ into survivor~$2$, and so
on. The $l$th block of the composition---the initial
particles merging into survivor~$l$---has indices
$n_1{+}\cdots{+}n_{l-1}{+}1$ through
$n_1{+}\cdots{+}n_l$. Write $y_1, \ldots, y_k$ for the
survivor positions at time~$T$.

\begin{definition}[Coalescence matrix]
\label{def:coalescence-matrix}
Both rows and columns of the $n \times n$
\emph{coalescence matrix} $\tilde{M}$ are indexed by
$\{1, \ldots, n\}$. The entry in row~$i$, column~$j$
(where $j$ lies in the $l$th block, with survivor
position~$y_l$) is
\[
\tilde{M}_{ij} = \begin{cases}
P(x_i,\, y_l)
  & \text{if $j$ is the first index in its
  block}, \\
F(x_i,\, y_l) - [i < j]
  & \text{otherwise}.
\end{cases}
\]
The first column of each block contains transition
probabilities~$P$; the remaining $n_l - 1$ columns
contain cumulative sums~$F$ with a staircase shift.
\end{definition}

\begin{theorem}[Coalescence determinant
  {\cite{Sniady2026coalescence}}]
\label{thm:coalescence-det}
Under the coalescence pattern $n_1 + \cdots + n_k = n$,
the joint probability of survivor positions at
$y_1 < \cdots < y_k$ is $\det(\tilde{M})$.
\end{theorem}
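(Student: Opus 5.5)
We need to prove the coalescence determinant theorem. The plan is a Karlin--McGregor/Lindström--Gessel--Viennot style argument, arranged so that in the final answer only the configurations with the prescribed coalescence pattern survive.

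\textbf{Step 1 (column operations).} We first reinterpret each column of $\tilde{M}$ probabilistically. For a non-first column $j$ in block $l$, the entry $F(x_i,y_l)-[i<j]$ equals $\PP(\text{walk from } x_i \text{ ends at } \le y_l)$ for $i\ge j$. For $i<j$ it equals $-\PP(\text{walk from } x_i \text{ ends at } > y_l)$.

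So the column records, with a sign, on which side of the ``cut'' $y_l$ the walker from $x_i$ ends. The cut is aligned with the initial ordering: particles $i<j$ should end weakly left of the $j$-th merger point and particles $i\ge j$ to the right. Via skip-freeness, this translates into events of the form ``particle $i$ is to the left/right of particle $j$ at time $T$.''

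\textbf{Step 2 (independent walks and expansion).} Next we expand $\det(\tilde{M})$ as a signed sum over permutations $\sigma$. Each term is the product of independent walkers' probabilities. Each term is thus the expectation, under $n$ independent skip-free walks started at $x_{\sigma(j)}$ for column $j$, of the indicator that:
\begin{itemize}
\item first columns of blocks: the walker ends exactly at $y_l$;
\item other columns: the walker ends in a half-line, with a sign determined by whether $\sigma(j)<j$.
\end{itemize}

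\textbf{Step 3 (sign-reversing involution).} We then construct a sign-reversing involution in the spirit of Karlin--McGregor. Whenever two walkers meet, say at the earliest meeting time among a canonically chosen pair, we swap their futures. This swaps the corresponding $\sigma$ values and flips the sign. Skip-freeness guarantees that walkers cannot cross without meeting, so the swap is well defined and measure preserving (strong Markov property at the meeting time).

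The key design point is which meetings to cancel. The coalescing system is realized from independent walks by letting, at each meeting, the higher-indexed (or a canonical) walker follow the other's path afterwards. The ``ghost'' walkers continue independently.

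Meetings between walkers of different blocks must be cancelled entirely, as in Karlin--McGregor, since the prescribed pattern forbids those mergers before time $T$. Within a block, meetings are required, and the half-line indicators with staircase shift are exactly what integrating out the ghost's endpoint produces. After the merger the ghost's final position is free. Summing over it gives a cumulative sum $F$, and the correction $-[i<j]$ accounts for the ghost being required to sit on the correct side.

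\textbf{Step 4 (identify survivors).} Finally we show the surviving, non-cancelled terms are exactly the identity permutation. Under that permutation, within each block the walkers must have merged consecutively into the first walker, whose endpoint is $y_l$, and no cross-block meetings occur. This matches the event of the coalescence pattern with survivors at $y_1<\dots<y_k$. Its probability then equals $\det(\tilde{M})$.

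\textbf{Main obstacle.} The hard step is making Step 3 rigorous for blocks of size $>2$. The involution must cancel the ``wrong'' within-block configurations, for example ghosts crossing the wrong way or merging in a non-consecutive order, while preserving exactly one copy of the correct event. The half-line columns must also match the integrated-out ghost endpoints with the right signs.

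My first attempt would be to do it inductively on $n$. I would peel off the last particle of a block: condition on its first meeting with its left neighbor and use a one-column identity $F(x_i,y)-[i<j]$ versus $F$ of the neighbor. Then I would apply the induction hypothesis to the remaining $n-1$ particles, verifying the base case $n=2$ by direct reflection at the meeting time.
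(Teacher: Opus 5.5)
Your proposal does not yet prove the theorem. The decisive step (Step~3, together with the identification in Step~4) is only announced, and the mechanism you describe is not the right one. Already for $n=2$ and pattern~$2$,
\[
\det\tilde{M} = P(x_1,y)\,F(x_2,y) + P(x_2,y)\bigl(1-F(x_1,y)\bigr),
\]
and the transposition term is \emph{nonnegative}: the sign of the permutation and the $-1$ of the staircase cancel. The identity term alone is $\PP(W_1(T)=y,\ W_2(T)\le y)$ for independent walkers, which is in general strictly smaller than the coalescence probability. So the claim that only the identity permutation survives is false. What actually happens is that swapping futures at the meeting time turns the transposition term into $\PP(\text{met},\ W_1(T)=y,\ W_2(T)>y)$. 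This configuration lies outside the support of the identity term, so the swap is not an involution on the terms of the expansion. The two terms \emph{add up} to $\PP(\text{met},\ W_1(T)=y)$: the ghost's endpoint is integrated over both half-lines, one half-line coming from each permutation. Within a block, different permutations are glued together, not cancelled.

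Across blocks, ``cancel all cross-block meetings'' is also wrong as stated. After a merger the ghost runs on independently and may meet walkers of other blocks, and such configurations belong to the event. An involution would have to act only at meetings of \emph{live} paths of different blocks. Since which segments are live depends on the merge times, proving that swapping at the first such meeting is a well-defined, measure-preserving involution compatible with the ghost bookkeeping is precisely the content of the theorem, and nothing in your proposal supplies it. (The present paper does not reprove the statement; it imports it from the companion paper, whose argument is organized around integrating out the ghosts.)

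You have also placed the obstacle in the wrong spot. Blocks of size $>2$ are harmless. Two consecutive $F$-columns $j,j+1$ of one block differ by $[i<j]-[i<j+1]=-[i=j]$, so column operations and Laplace expansion reduce $\det\tilde{M}$ to the coalescence matrix of the two extreme particles of each block. This matches the trapping argument in the proof of \Cref{thm:xy-correlation}. The real difficulty is several interacting blocks, already in pattern $2+2$. Your induction, as phrased, does not close. Conditioning on the first meeting of one designated pair does not prevent other pairs from having met earlier. At that random time the other particles sit at random positions with time $T-\tau$ left, so you would need a composition identity for these mixed $P$/$F$ matrices that you do not have.

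A version that works: let $\tau$ be the first time \emph{any} two adjacent walkers coincide.
\begin{enumerate}
\item Each row of $\tilde{M}$, computed with remaining time $T-t$ from the current position of its own walker, is a martingale. Hence the determinant along independent walkers is a martingale, and $\det\tilde{M}(x;T)=\EE\bigl[\det\tilde{M}(W_{\tau\wedge T};\,T-\tau\wedge T)\bigr]$.
\item If the coinciding walkers $i,i+1$ lie in different blocks, column $i+1$ is a $P$-column. Rows $i$ and $i+1$ are then equal and the determinant vanishes, matching the impossibility of the event.
\item If they lie in the same block, row $i$ equals row $i+1$ minus the unit row $e_{i+1}^{T}$. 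The determinant then equals the minor deleting row $i$ and column $i+1$, which is exactly the $(n-1)$-particle coalescence matrix with that block shortened by one. Iterate if several pairs coincide.
\item The $T=0$ identity: at distinct positions, with $P$ a delta and $F$ a step function, the determinant is the indicator that the pattern is $1+\cdots+1$ and the positions equal $y$. This follows from a short rank count.
\end{enumerate}
Combining these with the strong Markov property of the coalescing system and induction on $n$ gives the theorem.
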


The formula above is stated for discrete state spaces.
For continuous processes satisfying the
Karlin--McGregor assumptions (such as Brownian motion),
transition probabilities~$P$ become densities and
cumulative sums~$F$ become CDFs; the determinant then
gives a probability density rather than a probability
mass.
See~\cite{Sniady2026coalescence} for the general statement
covering both cases.

\begin{example}[Pattern $2 + 1$]\label{ex:pattern-2+1}
Three particles; the first two merge
(survivor~$y_1$), the third survives alone ($y_2$):
\[
\tilde{M} = \begin{pmatrix}
P(x_1, y_1) & F(x_1, y_1) - 1 & P(x_1, y_2) \\
P(x_2, y_1) & F(x_2, y_1) & P(x_2, y_2) \\
P(x_3, y_1) & F(x_3, y_1) & P(x_3, y_2)
\end{pmatrix}.
\]
\end{example}

\section{The Wall-Particle System}
\label{sec:xy-system}
\label{sec:pairs}%
\subsection{Two coupled sequences}

Consider a coalescing skip-free process on~$\ZZ$ with every site
initially occupied. When two particles meet, they coalesce and
continue as one. Fix a time $T > 0$. We call each surviving particle
a \emph{survivor} and
write $\heirseq = (\heirpt_j)_{j \in \ZZ}$ for the increasing
sequence of survivor positions (time-$T$ coordinates), indexed
by the integers.

Each survivor owns a \emph{basin}: the contiguous set of initial
positions whose particles merged into it. The basins
partition~$\ZZ$. Between consecutive basins lies a \emph{wall}: the
half-integer separating the last initial position in one basin from
the first in the next. Write
$\wallseq = (\wallpt_i)_{i \in \ZZhalf}$ for the increasing
sequence of walls (time-$0$ coordinates), where
$\ZZhalf = \ZZ + \tfrac{1}{2}$. The basin of $\heirpt_j$ is
the set of initial integers in the interval
$(\wallpt_{j-\nicefrac{1}{2}}, \wallpt_{j+\nicefrac{1}{2}})$.
Throughout, $\wallseq$ and $\heirseq$ denote the random
sequences, with components $\wallpt_i$ and~$\heirpt_j$.
When $x_i$ and $y_j$ appear without boldface in formulas,
they denote specific (deterministic) positions.
See \Cref{fig:xy-system} for an illustration.

Because walls live at time~$0$ and survivors at time~$T$,
there is no interlacing constraint on their values---a
survivor need not lie inside its basin. The two sequences have interleaved
indices---half-integers for walls, integers for survivors:
\[
\cdots \nwarrow \wallpt_{\nicefrac{1}{2}} \nearrow \heirpt_1
  \nwarrow \wallpt_{\nicefrac{3}{2}} \nearrow \heirpt_2
  \nwarrow \wallpt_{\nicefrac{5}{2}} \nearrow \cdots
\]
We use this \emph{zigzag notation} throughout: each arrow
connects a wall to an adjacent survivor in the index order.

\begin{remark}[Labeling convention]
\label{rem:labeling}
The labeling of walls and survivors is determined only up to a
global shift of the index set. When needed, we fix a reference
by requiring
$\wallpt_{-\nicefrac{1}{2}} < 0 \leq \wallpt_{\nicefrac{1}{2}}$,
placing the origin in the basin of $\heirpt_0$. This convention
plays no role in the distributional results.
\end{remark}

Each wall~$x_i$ ($i \in \ZZhalf$) is flanked by two initial
integer positions:
\[
a_i = x_i - \tfrac{1}{2}, \qquad\qquad b_i = x_i + \tfrac{1}{2}.
\]
Position~$a_i$ (to the left of the wall)
belongs to the basin of survivor~$y_{i-\nicefrac{1}{2}}$,
and $b_i$ (to the right) to the basin of
survivor~$y_{i+\nicefrac{1}{2}}$.

\subsection{Correlation function}
\label{sec:finite-marginals}

\begin{theorem}[Wall-particle correlation function]
\label{thm:xy-correlation}
Consider a coalescing skip-free process on~$\ZZ$ with every site
initially occupied. For positions
$x_{\nicefrac{1}{2}} < \cdots < x_{k-\nicefrac{1}{2}}$
and $y_0 < \cdots < y_k$,
the probability that $(\wallseq, \heirseq)$ contains the
consecutive pattern
\[
y_0 \nwarrow x_{\nicefrac{1}{2}} \nearrow y_1
  \nwarrow x_{\nicefrac{3}{2}} \nearrow \cdots
  \nwarrow x_{k-\nicefrac{1}{2}} \nearrow y_k
\]
equals $\det(\tilde{M})$, where $\tilde{M}$ is the
coalescence matrix (\Cref{def:coalescence-matrix}) for
$2k$ particles started at
$a_1, b_1, \ldots, a_k, b_k$ with coalescence pattern
$1{+}2{+}\cdots{+}2{+}1$: particle~$a_1$ survives alone
at~$y_0$; each pair $(b_l, a_{l+1})$ merges into
survivor~$y_l$; and $b_k$ survives alone at~$y_k$.
\end{theorem}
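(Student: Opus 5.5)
The plan is to reduce the infinite-system event to an event involving only the $2k$ particles started at $a_1, b_1, \ldots, a_k, b_k$, and then apply \Cref{thm:coalescence-det} with pattern $1{+}2{+}\cdots{+}2{+}1$.

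\emph{Step 1: translate the event.} The consecutive pattern says four things.
\begin{enumerate}
\item There are walls at $x_{\nicefrac12}, \ldots, x_{k-\nicefrac12}$, and no other walls between them.
\item The particles from $a_1$ and $b_1$ end at different locations: $a_1$ ends at $y_0$ and $b_1$ ends at $y_1$.
\item For each $l$, all initial sites in $[b_l, a_{l+1}]$ merge into a single survivor at $y_l$.
\item The particle from $b_k$ ends at $y_k$, distinct from the survivor of $a_k$.
\end{enumerate}
By skip-freeness, trajectories preserve weak order, and coalescence is "monotone". So all sites in $[b_l, a_{l+1}]$ merge if and only if the two extreme particles $b_l$ and $a_{l+1}$ have met by time $T$, since everything in between is sandwiched. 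Similarly, two adjacent sites $a_l, b_l$ lie in different basins if and only if their particles have not met by time $T$. So the event equals the following event for the $2k$ tracked particles: $a_1 \mapsto y_0$; $b_l$ and $a_{l+1}$ coalesce and end at $y_l$; $b_k \mapsto y_k$; and the pairs $(a_l, b_l)$ never meet. Since $y_0 < y_1 < \cdots < y_k$ and ordering is preserved, the distinct endpoints already force these non-meetings.

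\emph{Step 2: marginalize out the other particles.} In a coalescing system, the joint law of the trajectories of any finite subfamily of particles is that of the coalescing system started from that subfamily alone. This is the consistency property of coalescing flows: coalescence does not affect a particle's own path law, and merging is determined pairwise. So the probability depends only on the $2k$-particle coalescing system started at $a_1 < b_1 < \cdots < a_k < b_k$.

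\emph{Step 3: identify the event with the coalescence pattern.} The event of Step 1 is exactly "coalescence pattern $1{+}2{+}\cdots{+}2{+}1$ with survivors at $y_0 < \cdots < y_k$" for this finite system. \Cref{thm:coalescence-det} then gives $\det(\tilde M)$.

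\emph{Main obstacle.} The delicate part is Step 1, which I would argue carefully. "Same basin" means that the particles have coalesced by time $T$, which needs the monotonicity argument. In discrete time, walks could also cross without meeting on integer times. The skip-free/Karlin--McGregor assumption is what rules this out, for example via parity or a continuous-time formulation, and I would state it explicitly there. The consistency claim in Step 2 also deserves a sentence: construct the system by letting each particle follow its own independent driving path until it meets a lower-indexed (leftmost) particle, and note that the restriction to a subfamily has the same rule.
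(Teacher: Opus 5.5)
Your proposal is correct and follows essentially the paper's route. You reduce the wall-particle event to the coalescence event for the $2k$ flanking particles $a_1, b_1, \ldots, a_k, b_k$, using two facts: intermediate particles are sandwiched between $b_l$ and $a_{l+1}$, and untracked particles do not change the law of the tracked ones. Then you apply \Cref{thm:coalescence-det}. Your Step~1 spells out the translation of the event more carefully than the paper does.

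One small fix is needed in Step~2. With the ``lowest-index wins'' rule you suggest, a tracked particle such as $a_{l+1}$ can meet an untracked intermediate particle of lower index and adopt its path, so the restriction to the subfamily does not literally obey the same rule. You can repair this in any of three ways. You can argue consistency in law via the strong Markov property. You can give the tracked particles priority, as in Arratia's incremental construction that the paper invokes, where added particles are absorbed without altering existing trajectories. Or you can use a space-time flow construction.
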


\begin{proof}
Consider first only the $2k$ flanking particles
$a_1, b_1, \ldots, a_k, b_k$ (no other sites occupied;
see \Cref{fig:proof-schema} for $k = 2$).
The coalescence determinant
(\Cref{thm:coalescence-det}) gives the probability of
the stated coalescence event as~$\det(\tilde{M})$.

Now populate every remaining integer site. In Arratia's
construction of coalescing
processes~\cite{Arratia1979}, additional particles
do not alter the trajectories of existing ones: they
follow the same underlying paths and merge into whatever
they meet. Because the process is skip-free, every
intermediate particle (at a site between~$b_l$
and~$a_{l+1}$) is trapped between the converging paths
of~$b_l$ and~$a_{l+1}$ and must coalesce into the same
survivor~$y_l$ (see \Cref{fig:proof-schema}). The
wall-particle event in the full system therefore has the
same probability as the coalescence event for the $2k$
flanking particles alone.
\end{proof}

\begin{figure}[t]
\centering

\begin{tikzpicture}[scale=0.66]

\def\W{12}
\def\H{5}

\tikzset{
  styleA/.style={colA, line width=2.2pt, solid},
  styleB/.style={colB, line width=1.5pt, decorate,
    decoration={zigzag, segment length=4pt, amplitude=1pt}},
  styleHeir/.style={colP, line width=0.8pt, double,
    double distance=2pt},
}

\draw[gray, thick] (-0.3, 0) -- (\W, 0);
\draw[gray, thick] (-0.3, \H) -- (\W, \H);
\node[right, font=\small] at (\W + 0.1, 0) {$t = 0$};
\node[right, font=\small] at (\W + 0.1, \H) {$t > 0$};

\draw[->, thick, gray] (-0.7, 0.3) -- (-0.7, \H - 0.3);
\node[rotate=90, gray] at (-1.0, \H/2) {time};

\coordinate (Y0) at (1.5, \H);    %
\coordinate (Y1) at (6.0, \H);    %
\coordinate (Y2) at (10.2, \H);   %

\coordinate (coll) at (5.5, 2.8);

\draw[black!30, line width=0.5pt]
  (0.0, 0) -- (0.5, 0.7) -- (0.8, 1.0);

\draw[black!30, line width=0.5pt]
  (1.0, 0) -- (0.8, 0.7) -- (0.8, 1.0);
\draw[black!40, line width=0.7pt]
  (0.8, 1.0) -- (1.2, 1.8) -- (1.5, 2.2);

\draw[styleA]
  (2.0, 0) -- (1.8, 0.8) -- (1.5, 2.2);
\draw[styleA]
  (1.5, 2.2) -- (1.3, 3.2) -- (1.5, \H);

\draw[styleA]
  (3.0, 0) -- (3.5, 0.9) -- (4.0, 1.5) -- (4.8, 2.2) -- (coll);

\draw[black!35, line width=0.5pt]
  (4.0, 0) -- (3.8, 0.6) -- (3.5, 0.9);

\draw[black!35, line width=0.5pt]
  (5.0, 0) -- (4.6, 0.8) -- (4.2, 1.3) -- (4.0, 1.5);

\draw[black!35, line width=0.5pt]
  (6.0, 0) -- (6.3, 0.7) -- (6.8, 1.2) -- (7.0, 1.4);

\draw[black!35, line width=0.5pt]
  (7.0, 0) -- (7.2, 0.5) -- (7.5, 0.8);

\draw[styleB]
  (8.0, 0) -- (7.5, 0.8) -- (7.0, 1.4) -- (6.5, 2.0) -- (coll);

\draw[styleHeir]
    (coll) -- (5.8, 3.8) -- (Y1);

\filldraw[black] (coll) circle (2.5pt);

\draw[styleB]
  (9.0, 0) -- (9.3, 1.0) -- (9.5, 1.8) -- (9.8, 2.8)
  -- (10.0, 3.8) -- (10.2, \H);

\draw[black!35, line width=0.5pt]
  (10.0, 0) -- (9.8, 0.5) -- (9.5, 0.8) -- (9.3, 1.0);

\draw[black!35, line width=0.5pt]
  (11.0, 0) -- (10.5, 0.8) -- (10.0, 1.5) -- (9.5, 1.8);

\foreach \px in {0, 1, 4, 5, 6, 7, 10, 11} {
  \fill[black!50] (\px, 0) circle (1.5pt);
}

\node[circle, fill=colA, inner sep=2.2pt] at (2.0, 0) {};
\node[circle, fill=colA, inner sep=2.2pt] at (3.0, 0) {};

\node[circle, draw=colB, fill=white, line width=1.2pt,
  inner sep=1.8pt] at (8.0, 0) {};
\node[circle, draw=colB, fill=white, line width=1.2pt,
  inner sep=1.8pt] at (9.0, 0) {};

\fill (2.5, 0.25) -- +(-0.18, -0.35) -- +(0.18, -0.35) -- cycle;
\fill (8.5, 0.25) -- +(-0.18, -0.35) -- +(0.18, -0.35) -- cycle;

\node[below=3pt, font=\small] at (2.0, 0) {$a_1$};
\node[below=3pt, font=\small] at (3.0, 0) {$b_1$};
\node[below=3pt, font=\small] at (8.0, 0) {$a_2$};
\node[below=3pt, font=\small] at (9.0, 0) {$b_2$};

\node[above=1pt, font=\small] at (2.5, 0.25)
  {$x_{\nicefrac{1}{2}}$};
\node[above=1pt, font=\small] at (8.5, 0.25)
  {$x_{\nicefrac{3}{2}}$};

\node[circle, fill=colA, inner sep=2.5pt] at (Y0) {};
\node[circle, fill=colP, inner sep=2.5pt] at (Y1) {};
\node[circle, draw=colB, fill=white, line width=1.2pt,
  inner sep=2pt] at (Y2) {};

\node[above=3pt] at (Y0) {$y_0$};
\node[above=3pt] at (Y1) {$y_1$};
\node[above=3pt] at (Y2) {$y_2$};

\begin{scope}[shift={(\W + 0.8, \H/2 - 0.5)}]
  \draw[styleA] (0, 1.8) -- (0.7, 1.8);
  \node[right, font=\footnotesize] at (0.8, 1.8)
    {pair 1 (solid)};
  \draw[styleB] (0, 1.1) -- (0.7, 1.1);
  \node[right, font=\footnotesize] at (0.8, 1.1)
    {pair 2 (zigzag)};
  \draw[styleHeir] (0, 0.4) -- (0.7, 0.4);
  \node[right, font=\footnotesize] at (0.8, 0.4)
    {survivor (double)};
  \draw[black!35, line width=0.5pt] (0, -0.3) -- (0.7, -0.3);
  \node[right, font=\footnotesize] at (0.8, -0.3)
    {intermediate};
\end{scope}

\end{tikzpicture}

\caption{Proof of \Cref{thm:xy-correlation} for $k = 2$. The
coalescence determinant applies to the four flanking particles
$a_1, b_1, a_2, b_2$ (bold paths: solid for pair~$1$, zigzag
for pair~$2$). Particles $b_1$ and~$a_2$ coalesce into
survivor~$y_1$ (double line); particles $a_1$ and~$b_2$
survive as~$y_0$ and~$y_2$. The intermediate particles (thin
gray) cannot cross the flanking paths---the skip-free property
traps them in the closing funnel between~$b_1$ and~$a_2$---so
they are absorbed into the same survivors. Adding them does not
change the coalescence outcome for the flanking particles.}
\label{fig:proof-schema}
\end{figure}
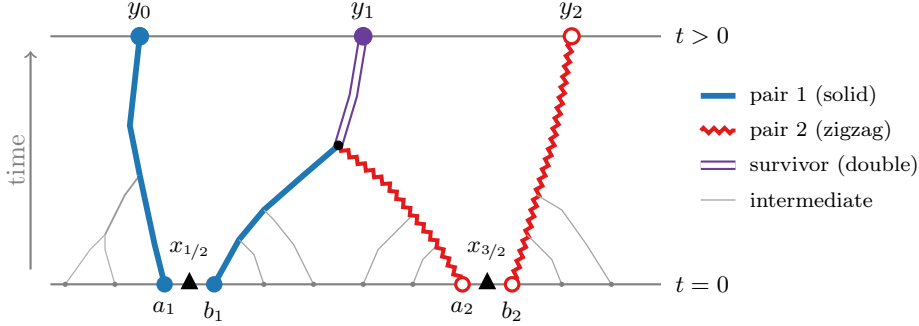

\subsection{Block structure}
\label{sec:block-matrix}%

We now examine the matrix $\tilde{M}$ from
\Cref{thm:xy-correlation} more closely.
The pattern $1{+}2{+}\cdots{+}2{+}1$ gives $\tilde{M}$
a $2 \times 2$ block structure: each wall contributes a
row-pair and each interior survivor a column-pair ($P$
and~$F$), while the two boundary survivors contribute
single $P$~columns. Write $B_{i,j}$ ($i \in \ZZhalf$, $j \in \ZZ$) for the
$2 \times 2$ block at row-pair~$i$ (wall) and
column-pair~$j$ (survivor):
\[
B_{i,j} = \begin{pmatrix}
P(a_i, y_j) & F(a_i, y_j) - [i < j] \\[0.3em]
P(b_i, y_j) & F(b_i, y_j) - [i < j]
\end{pmatrix},
\]
where $[i < j]$ is the Iverson bracket.

\begin{example}[Pattern $1{+}2{+}1$]
\label{ex:schema-1+2+1}
For $k = 2$ (two walls, three survivors), the pattern
$y_0 \nwarrow x_{\nicefrac{1}{2}} \nearrow y_1 \nwarrow
x_{\nicefrac{3}{2}} \nearrow y_2$ gives a
$4 \times 4$ matrix with column structure
$1{+}2{+}1$. The interior survivor~$y_1$ contributes a
block~$B_{i,j}$---a $P$~column and an $F$~column
carrying a staircase step---while the boundary
survivors~$y_0$ and~$y_2$ each contribute a single
$P$~column:
\[
\tilde{M} = \begin{pmatrix}
P(a_1, y_0) & P(a_1, y_1) & F(a_1, y_1) - 1
  & P(a_1, y_2) \\
P(b_1, y_0) & P(b_1, y_1) & F(b_1, y_1) - 1
  & P(b_1, y_2) \\
P(a_2, y_0) & P(a_2, y_1) & F(a_2, y_1)
  & P(a_2, y_2) \\
P(b_2, y_0) & P(b_2, y_1) & F(b_2, y_1)
  & P(b_2, y_2)
\end{pmatrix}.
\]
For the boundary case $k = 1$ (one wall, no interior
survivors, no coalescence at all), there are no blocks~$B_{i,j}$ and the
matrix reduces to the $2 \times 2$ Karlin--McGregor
determinant~\cite{KM1959}.
\end{example}

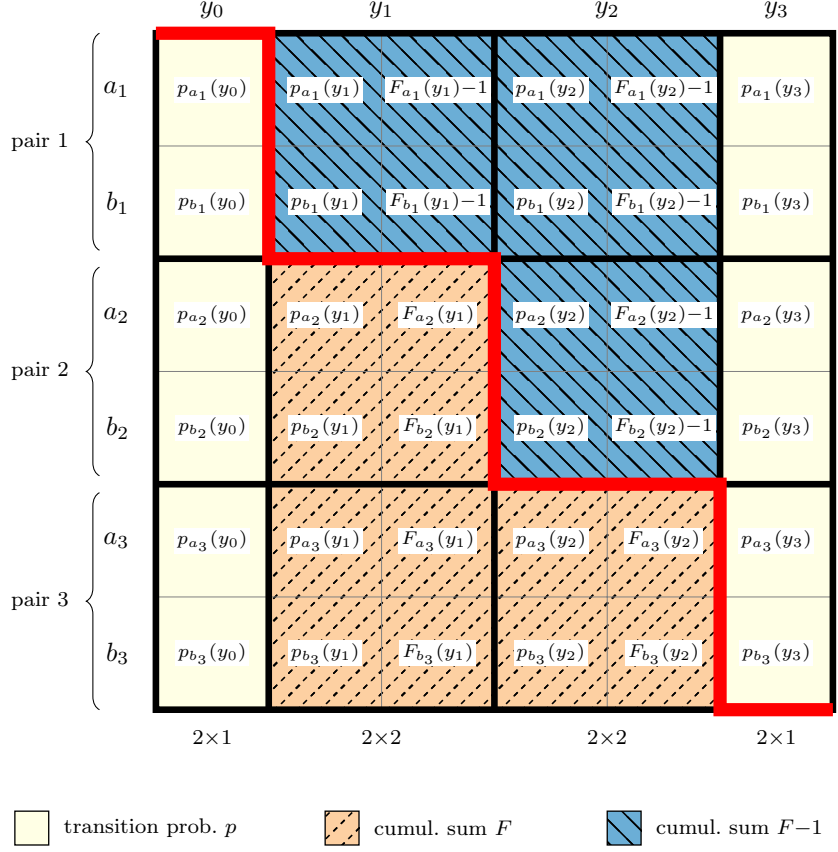
\begin{figure}[t]
\centering

\begin{tikzpicture}[scale=0.75]

\def\cellsize{2.0}

\colorlet{cbYellow}{YlGn-A}    %
\colorlet{cbOrange}{Oranges-D} %
\colorlet{cbPurple}{Blues-G}   %

\pgfdeclarepatternformonly{dashed ne lines}
  {\pgfpoint{-0.1cm}{-0.1cm}}{\pgfpoint{0.4cm}{0.4cm}}
  {\pgfpoint{0.3cm}{0.3cm}}
  {\pgfsetlinewidth{0.6pt}\pgfsetdash{{2pt}{2pt}}{0pt}
   \pgfpathmoveto{\pgfpoint{-0.1cm}{-0.1cm}}
   \pgfpathlineto{\pgfpoint{0.4cm}{0.4cm}}\pgfusepath{stroke}}

\pgfdeclarepatternformonly{solid nw lines}
  {\pgfpoint{-0.1cm}{-0.1cm}}{\pgfpoint{0.4cm}{0.4cm}}
  {\pgfpoint{0.3cm}{0.3cm}}
  {\pgfsetlinewidth{0.6pt}\pgfpathmoveto{\pgfpoint{-0.1cm}{0.4cm}}
   \pgfpathlineto{\pgfpoint{0.4cm}{-0.1cm}}\pgfusepath{stroke}}

\tikzset{
    blockHeir/.style={fill=cbYellow},
    blockBelow/.style={fill=cbOrange, postaction={pattern=dashed ne lines, pattern color=black}},
    blockAbove/.style={fill=cbPurple, postaction={pattern=solid nw lines, pattern color=black}},
    entrylabel/.style={font=\scriptsize, fill=white, inner sep=1pt},
}

\fill[blockHeir] (0, 4*\cellsize) rectangle (1*\cellsize, 6*\cellsize);
\fill[blockHeir] (0, 2*\cellsize) rectangle (1*\cellsize, 4*\cellsize);
\fill[blockHeir] (0, 0) rectangle (1*\cellsize, 2*\cellsize);

\fill[blockHeir] (5*\cellsize, 4*\cellsize) rectangle (6*\cellsize, 6*\cellsize);
\fill[blockHeir] (5*\cellsize, 2*\cellsize) rectangle (6*\cellsize, 4*\cellsize);
\fill[blockHeir] (5*\cellsize, 0) rectangle (6*\cellsize, 2*\cellsize);

\fill[blockAbove] (1*\cellsize, 4*\cellsize) rectangle (3*\cellsize, 6*\cellsize);
\fill[blockBelow] (1*\cellsize, 2*\cellsize) rectangle (3*\cellsize, 4*\cellsize);
\fill[blockBelow] (1*\cellsize, 0) rectangle (3*\cellsize, 2*\cellsize);

\fill[blockAbove] (3*\cellsize, 4*\cellsize) rectangle (5*\cellsize, 6*\cellsize);
\fill[blockAbove] (3*\cellsize, 2*\cellsize) rectangle (5*\cellsize, 4*\cellsize);
\fill[blockBelow] (3*\cellsize, 0) rectangle (5*\cellsize, 2*\cellsize);

\draw[gray, line width=0.3pt] (0, 0) grid[step=\cellsize] (6*\cellsize, 6*\cellsize);

\draw[line width=2.5pt, black] (0, 0) rectangle (6*\cellsize, 6*\cellsize);

\draw[line width=2.5pt, black] (0, 4*\cellsize) -- (6*\cellsize, 4*\cellsize);
\draw[line width=2.5pt, black] (0, 2*\cellsize) -- (6*\cellsize, 2*\cellsize);

\draw[line width=2.5pt, black] (1*\cellsize, 0) -- (1*\cellsize, 6*\cellsize);
\draw[line width=2.5pt, black] (3*\cellsize, 0) -- (3*\cellsize, 6*\cellsize);
\draw[line width=2.5pt, black] (5*\cellsize, 0) -- (5*\cellsize, 6*\cellsize);

\draw[line width=5pt, red]
    (0, 6*\cellsize) -- (1*\cellsize, 6*\cellsize) -- (1*\cellsize, 4*\cellsize) --
    (3*\cellsize, 4*\cellsize) -- (3*\cellsize, 2*\cellsize) --
    (5*\cellsize, 2*\cellsize) -- (5*\cellsize, 0) -- (6*\cellsize, 0);

\node[entrylabel] at (0.5*\cellsize, 5.5*\cellsize) {$p_{a_1}(y_0)$};
\node[entrylabel] at (1.5*\cellsize, 5.5*\cellsize) {$p_{a_1}(y_1)$};
\node[entrylabel] at (2.5*\cellsize, 5.5*\cellsize) {$F_{a_1}(y_1){-}1$};
\node[entrylabel] at (3.5*\cellsize, 5.5*\cellsize) {$p_{a_1}(y_2)$};
\node[entrylabel] at (4.5*\cellsize, 5.5*\cellsize) {$F_{a_1}(y_2){-}1$};
\node[entrylabel] at (5.5*\cellsize, 5.5*\cellsize) {$p_{a_1}(y_3)$};

\node[entrylabel] at (0.5*\cellsize, 4.5*\cellsize) {$p_{b_1}(y_0)$};
\node[entrylabel] at (1.5*\cellsize, 4.5*\cellsize) {$p_{b_1}(y_1)$};
\node[entrylabel] at (2.5*\cellsize, 4.5*\cellsize) {$F_{b_1}(y_1){-}1$};
\node[entrylabel] at (3.5*\cellsize, 4.5*\cellsize) {$p_{b_1}(y_2)$};
\node[entrylabel] at (4.5*\cellsize, 4.5*\cellsize) {$F_{b_1}(y_2){-}1$};
\node[entrylabel] at (5.5*\cellsize, 4.5*\cellsize) {$p_{b_1}(y_3)$};

\node[entrylabel] at (0.5*\cellsize, 3.5*\cellsize) {$p_{a_2}(y_0)$};
\node[entrylabel] at (1.5*\cellsize, 3.5*\cellsize) {$p_{a_2}(y_1)$};
\node[entrylabel] at (2.5*\cellsize, 3.5*\cellsize) {$F_{a_2}(y_1)$};
\node[entrylabel] at (3.5*\cellsize, 3.5*\cellsize) {$p_{a_2}(y_2)$};
\node[entrylabel] at (4.5*\cellsize, 3.5*\cellsize) {$F_{a_2}(y_2){-}1$};
\node[entrylabel] at (5.5*\cellsize, 3.5*\cellsize) {$p_{a_2}(y_3)$};

\node[entrylabel] at (0.5*\cellsize, 2.5*\cellsize) {$p_{b_2}(y_0)$};
\node[entrylabel] at (1.5*\cellsize, 2.5*\cellsize) {$p_{b_2}(y_1)$};
\node[entrylabel] at (2.5*\cellsize, 2.5*\cellsize) {$F_{b_2}(y_1)$};
\node[entrylabel] at (3.5*\cellsize, 2.5*\cellsize) {$p_{b_2}(y_2)$};
\node[entrylabel] at (4.5*\cellsize, 2.5*\cellsize) {$F_{b_2}(y_2){-}1$};
\node[entrylabel] at (5.5*\cellsize, 2.5*\cellsize) {$p_{b_2}(y_3)$};

\node[entrylabel] at (0.5*\cellsize, 1.5*\cellsize) {$p_{a_3}(y_0)$};
\node[entrylabel] at (1.5*\cellsize, 1.5*\cellsize) {$p_{a_3}(y_1)$};
\node[entrylabel] at (2.5*\cellsize, 1.5*\cellsize) {$F_{a_3}(y_1)$};
\node[entrylabel] at (3.5*\cellsize, 1.5*\cellsize) {$p_{a_3}(y_2)$};
\node[entrylabel] at (4.5*\cellsize, 1.5*\cellsize) {$F_{a_3}(y_2)$};
\node[entrylabel] at (5.5*\cellsize, 1.5*\cellsize) {$p_{a_3}(y_3)$};

\node[entrylabel] at (0.5*\cellsize, 0.5*\cellsize) {$p_{b_3}(y_0)$};
\node[entrylabel] at (1.5*\cellsize, 0.5*\cellsize) {$p_{b_3}(y_1)$};
\node[entrylabel] at (2.5*\cellsize, 0.5*\cellsize) {$F_{b_3}(y_1)$};
\node[entrylabel] at (3.5*\cellsize, 0.5*\cellsize) {$p_{b_3}(y_2)$};
\node[entrylabel] at (4.5*\cellsize, 0.5*\cellsize) {$F_{b_3}(y_2)$};
\node[entrylabel] at (5.5*\cellsize, 0.5*\cellsize) {$p_{b_3}(y_3)$};

\node[left] at (-0.3, 5.5*\cellsize) {$a_1$};
\node[left] at (-0.3, 4.5*\cellsize) {$b_1$};
\node[left] at (-0.3, 3.5*\cellsize) {$a_2$};
\node[left] at (-0.3, 2.5*\cellsize) {$b_2$};
\node[left] at (-0.3, 1.5*\cellsize) {$a_3$};
\node[left] at (-0.3, 0.5*\cellsize) {$b_3$};

\draw[decorate, decoration={brace, amplitude=5pt, mirror}]
    (-1.0, 6*\cellsize) -- (-1.0, 4*\cellsize + 0.15)
    node[midway, left=8pt, font=\footnotesize] {pair 1};
\draw[decorate, decoration={brace, amplitude=5pt, mirror}]
    (-1.0, 4*\cellsize - 0.15) -- (-1.0, 2*\cellsize + 0.15)
    node[midway, left=8pt, font=\footnotesize] {pair 2};
\draw[decorate, decoration={brace, amplitude=5pt, mirror}]
    (-1.0, 2*\cellsize - 0.15) -- (-1.0, 0)
    node[midway, left=8pt, font=\footnotesize] {pair 3};

\node[above] at (0.5*\cellsize, 6*\cellsize + 0.1) {$y_0$};
\node[above] at (2*\cellsize, 6*\cellsize + 0.1) {$y_1$};
\node[above] at (4*\cellsize, 6*\cellsize + 0.1) {$y_2$};
\node[above] at (5.5*\cellsize, 6*\cellsize + 0.1) {$y_3$};

\node[below, font=\footnotesize] at (0.5*\cellsize, -0.2) {$2{\times}1$};
\node[below, font=\footnotesize] at (2*\cellsize, -0.2) {$2{\times}2$};
\node[below, font=\footnotesize] at (4*\cellsize, -0.2) {$2{\times}2$};
\node[below, font=\footnotesize] at (5.5*\cellsize, -0.2) {$2{\times}1$};

\begin{scope}[shift={(-2.5, -2.4)}]
  \fill[blockHeir] (0, 0) rectangle (0.6, 0.6);
  \draw[black, line width=0.5pt] (0, 0) rectangle (0.6, 0.6);
  \node[right, font=\footnotesize] at (0.7, 0.3)
    {transition prob.~$p$};

  \fill[blockBelow] (5.5, 0) rectangle (6.1, 0.6);
  \draw[black, line width=0.5pt] (5.5, 0) rectangle (6.1, 0.6);
  \node[right, font=\footnotesize] at (6.2, 0.3)
    {cumul.\ sum~$F$};

  \fill[blockAbove] (10.5, 0) rectangle (11.1, 0.6);
  \draw[black, line width=0.5pt] (10.5, 0) rectangle (11.1, 0.6);
  \node[right, font=\footnotesize] at (11.2, 0.3)
    {cumul.\ sum~$F{-}1$};
\end{scope}

\end{tikzpicture}

\caption{Block structure of~$\tilde{M}$ for the
$1{+}2{+}2{+}1$ pattern ($k = 3$ walls). Rows come in
pairs; columns are grouped by survivor: $2 \times 1$
boundary blocks for~$y_0$ and~$y_3$, and $2 \times 2$
interior blocks for~$y_1$ and~$y_2$, each containing
a~$P$ column and an~$F$ column. The thick red staircase
separates~$F{-}1$ blocks (dark blue, solid hatching)
from~$F$ blocks (orange, dashed hatching);
unhatched yellow blocks contain only~$P$ entries.}
\label{fig:matrix-blocks}
\end{figure}

\begin{corollary}[Block structure of $\tilde{M}$]
\label{cor:block-structure}
The matrix $\tilde{M}$ from
\Cref{thm:xy-correlation} for a pattern with
$k$~walls has a column structure
$1{+}2{+}\cdots{+}2{+}1$: the first and last columns
are single $P$~columns (one for each boundary
survivor), and each interior survivor contributes a
$2 \times 2$ block~$B_{i,j}$.
See \Cref{fig:matrix-blocks} for the $k = 3$ case,
where the block structure and the staircase pattern
are fully apparent.
\end{corollary}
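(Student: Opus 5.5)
The plan is to read the block structure directly off \Cref{def:coalescence-matrix}, applied to the pattern $1{+}2{+}\cdots{+}2{+}1$ from \Cref{thm:xy-correlation}. The corollary is a bookkeeping statement, so the proof is a direct index computation.

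First, fix the indexing. Rows are indexed by the $2k$ initial particles in the order $a_{1/2}, b_{1/2}, \ldots, a_{k-1/2}, b_{k-1/2}$; these are the particles written $a_1, b_1, \ldots, a_k, b_k$ in the theorem, with wall labels shifted to $\ZZhalf$. Group the rows into consecutive pairs $(a_i, b_i)$, one pair per wall $i$. Columns are grouped according to the composition. The first block has size $1$, so its only column is a ``first index'' column and equals $(P(x, y_0))_x$. The last block, for $y_k$, likewise consists of a single $P$ column. Each interior block $l \in \{1, \ldots, k-1\}$ has size $2$. Its first column is the $P$ column $P(\cdot, y_l)$, and its second column is $F(\cdot, y_l) - [r < c]$, where $r$ is the row index and $c$ the column index. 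This already gives the $1{+}2{+}\cdots{+}2{+}1$ column structure.

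Second, restrict to row-pair $i$ and interior column-pair $j$ and check that the entries match $B_{i,j}$. The $P$ entries match trivially. For the $F$ entry I must show that the global staircase indicator $[r < c]$ agrees with the block-level Iverson bracket $[i < j]$ for both rows of the pair.

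Compute the global indices. The second column of block $j$ is $c = 1 + 2(j-1) + 2 = 2j+1$, since one column precedes block $1$ and each interior block contributes two. Row $a_i$ has index $r = 2(i - \tfrac12) + 1 = 2i$, and row $b_i$ has index $2i + 1$.

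Both $i - \tfrac12$ and $j$ are integers. If $i < j$, then $i \le j - \tfrac12$, so $2i + 1 \le 2j < 2j + 1$, and both rows satisfy $r < c$. If $i > j$, then $i \ge j + \tfrac12$, so $2i \ge 2j + 1$, and neither row satisfies $r < c$. Hence $[r < c] = [i < j]$ uniformly across each row-pair, and the block equals $B_{i,j}$.

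The only potential obstacle is this off-by-one matching between the half-integer wall labels and the global row and column indices. The check above shows that the staircase steps fall exactly on block boundaries, which is the red staircase in \Cref{fig:matrix-blocks}. As a sanity check, the case $k = 2$ reproduces \Cref{ex:schema-1+2+1}.
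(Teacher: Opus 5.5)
Your proof is correct and follows the paper's route: the corollary is read directly off \Cref{def:coalescence-matrix} for the pattern $1{+}2{+}\cdots{+}2{+}1$. Your index check that the row-level staircase $[r<c]$ equals the block-level bracket $[i<j]$ uniformly on each row-pair is the observation the paper records in \Cref{rem:staircase-blocks}, here made fully explicit.
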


\begin{remark}[Staircase and block structure]
\label{rem:staircase-blocks}
The block formula for~$B_{i,j}$ uses the Iverson
bracket~$[i < j]$, which steps at block boundaries.
The coalescence matrix
(\Cref{def:coalescence-matrix}), however, defines
the staircase at the level of individual rows. For
the wall-particle patterns
$1{+}2{+}\cdots{+}2{+}1$, each staircase step falls
between the two rows of a single block, so the two
conventions agree on all $F$~columns (where the
distinction matters) and may differ only on
$P$~columns---but $P$~entries do not depend on the
staircase. Warren's formula (\Cref{sec:warren}),
however, sums over all $2^{n-1}$ coalescence patterns
and requires the original row-level staircase.
\end{remark}

\begin{remark}[Asymmetry between walls and survivors]
	\label{rem:xy-asymmetry}
	The block matrix~$\tilde{M}$ treats walls and survivors
	asymmetrically: the pattern has $k$ walls but $k + 1$
	survivors, and the two boundary survivors each contribute
	only one column (rather than two), so walls and survivors
	cannot simply be interchanged.
	When the underlying process has a checkerboard
	structure---such as discrete-time $\pm 1$ random
	walk---this asymmetry can be resolved: a decomposition of
	the lattice gives a duality between walls and survivors,
	connecting the wall-particle system to Pfaffian point
	processes. This is developed
	in~\cite{Sniady2026pfaffian}.
\end{remark}

\subsection{Examples}
\label{sec:examples}

We describe two classes of processes satisfying the
skip-free assumption of
\Cref{thm:xy-correlation}.

\begin{example}[Simple symmetric random walk]
\label{ex:srw-parity}
\label{rem:parity}%
Consider the $\pm 1$ simple random walk: at each
discrete time step, every particle moves left or right
with equal probability. Space-time splits into two
checkerboard sublattices: a particle at position~$x$ at
time~$t$ satisfies either $x + t \equiv 0$ or
$x + t \equiv 1 \pmod{2}$, and each particle stays on
one sublattice forever. If we start particles at
\emph{every} site of~$\ZZ$, the Karlin--McGregor
assumptions fail: a particle starting at an even site
and a particle starting at an odd site live on
complementary sublattices and can exchange order without
ever sharing a site, so the non-crossing property does
not hold.

The remedy is to occupy a single sublattice---say all
even sites $2\ZZ$ at time~$0$. Particles on the same
sublattice cannot cross without meeting (they share the
same parity at every time step), so the skip-free
assumption holds. Applying the framework to the
initial sublattice~$2\ZZ$ (with spacing~$2$ playing the role
of the unit lattice), walls sit at the odd
integers~$2\ZZ + 1$. At time~$T$, all survivors
share the same parity, so all gaps between consecutive
survivors are even
(see \Cref{thm:discrete-gap-body} for the gap
distribution).
\end{example}

\begin{example}[Birth-death chains]
\label{ex:birth-death}
Continuous-time birth-death chains on the
non-negative integers are skip-free by construction
(only $\pm 1$ transitions), and every integer is a
valid state at every time---no parity constraint.
The wall-particle framework applies directly, with
walls at $\{\tfrac{1}{2}, \tfrac{3}{2}, \ldots\}$.
For instance, the M/M/1 queue has transition
probabilities expressible via modified Bessel
functions~\cite{KM1959}.
\end{example}

\subsection{Multi-pattern correlations}
\label{sec:multi-pattern}

\Cref{thm:xy-correlation} extends to several separated
consecutive patterns observed simultaneously, with an
unspecified number of intermediate survivors between
them. We state this for completeness; it is not used
in the present paper.

\begin{theorem}[Multi-pattern correlation function]
\label{thm:multi-pattern}
Consider $m$ separated consecutive patterns in the
wall-particle system. Pattern~$\alpha$ ($\alpha = 1,
\ldots, m$) consists of $k_\alpha$ walls and
$k_\alpha + 1$ survivors:
\[
y^{(\alpha)}_0 \nwarrow x^{(\alpha)}_{\nicefrac{1}{2}}
  \nearrow y^{(\alpha)}_1
  \nwarrow \cdots
  \nwarrow x^{(\alpha)}_{k_\alpha - \nicefrac{1}{2}}
  \nearrow y^{(\alpha)}_{k_\alpha},
\]
with walls and survivors each globally increasing.
Between consecutive patterns, the number of intermediate
survivors is unspecified.

The probability that $(\wallseq, \heirseq)$ contains
all $m$ patterns simultaneously equals
$\det(\tilde{M})$, where $\tilde{M}$ is the
coalescence matrix (\Cref{def:coalescence-matrix}) for
the $2K$ flanking particles
($K = \sum_{\alpha} k_\alpha$). Within each pattern,
the coalescence is as in \Cref{thm:xy-correlation};
between consecutive patterns, the boundary particles
survive alone.
\end{theorem}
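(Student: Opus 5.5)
The plan is to follow the proof of \Cref{thm:xy-correlation} verbatim, with the set of flanking particles enlarged to include all $m$ patterns. For each pattern~$\alpha$ and each wall $x^{(\alpha)}_{i}$, form the flanking sites $a^{(\alpha)}_i = x^{(\alpha)}_i - \tfrac12$ and $b^{(\alpha)}_i = x^{(\alpha)}_i + \tfrac12$. Since the walls are globally increasing and lie on half-integers, these $2K$ sites are distinct and ordered as
\[
a^{(1)}_{1} < b^{(1)}_{1} < \cdots < b^{(1)}_{k_1} < a^{(2)}_{1} < \cdots < b^{(m)}_{k_m}.
\]
I first consider the system in which only these $2K$ sites are occupied. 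The target event translates into a single coalescence pattern:
\begin{itemize}
\item within pattern~$\alpha$, $b^{(\alpha)}_l$ and $a^{(\alpha)}_{l+1}$ merge into $y^{(\alpha)}_l$;
\item $a^{(\alpha)}_1$ survives alone at $y^{(\alpha)}_0$;
\item $b^{(\alpha)}_{k_\alpha}$ survives alone at $y^{(\alpha)}_{k_\alpha}$.
\end{itemize}
In particular, the right boundary particle $b^{(\alpha)}_{k_\alpha}$ of one pattern and the left boundary particle $a^{(\alpha+1)}_1$ of the next do \emph{not} merge. The composition is therefore $1+2+\cdots+2+1+1+2+\cdots+2+1+\cdots$. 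The survivor positions are strictly increasing because the $y$'s are globally increasing. \Cref{thm:coalescence-det} then gives the probability of this event for the $2K$ particles as $\det(\tilde M)$.

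The second step is to populate all remaining sites and check that the two events coincide, using Arratia's coupling. In this coupling, added particles do not alter the existing trajectories.

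Sites strictly between $b^{(\alpha)}_l$ and $a^{(\alpha)}_{l+1}$ are handled as before. By skip-freeness, each such particle is sandwiched between two paths that coalesce by time~$T$, so it joins $y^{(\alpha)}_l$. Hence, inside each pattern, the positions $x^{(\alpha)}_i$ are exactly consecutive walls and the $y^{(\alpha)}_j$ are the corresponding consecutive survivors.

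Sites between $b^{(\alpha)}_{k_\alpha}$ and $a^{(\alpha+1)}_1$ produce some unspecified number of basins, possibly zero. Here I must verify two things:
\begin{itemize}
\item the walls at $x^{(\alpha)}_{k_\alpha}$ and $x^{(\alpha+1)}_1$ remain walls;
\item the survivors $y^{(\alpha)}_{k_\alpha}$ and $y^{(\alpha+1)}_0$ are distinct. This is guaranteed because $b^{(\alpha)}_{k_\alpha}$ and $a^{(\alpha+1)}_1$ end at different positions.
\end{itemize}

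Conversely, suppose the full system contains all patterns with walls and survivors as specified. Then the flanking particles must realize exactly the chosen coalescence pattern, because basin membership determines which flanking particles merged. It follows that the two events are equal, not merely equal in probability.

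The main obstacle is the bookkeeping for the staircase. The row-level Iverson convention of \Cref{def:coalescence-matrix} must be shown to produce the intended matrix when consecutive singleton blocks $1+1$ appear at pattern junctions. This causes no difficulty, since singleton blocks carry only $P$~columns, which are unaffected by the staircase, as noted in \Cref{rem:staircase-blocks}.

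A second point of care is that the event should fix neither the global indices nor the number of intervening survivors. The event ``the $m$ patterns appear somewhere as consecutive subsequences'' is exactly what the flanking-particle event captures, with no further constraint. No summation over intermediate configurations is therefore required, and the formula follows.
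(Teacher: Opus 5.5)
Your route is the paper's: apply \Cref{thm:coalescence-det} to the $2K$ flanking particles, with singleton blocks at the junctions, then use Arratia's coupling and skip-freeness to populate the remaining sites. Your treatment of the junctions is correct and more careful than the paper's one-line ``trapping'' remark. At a junction you only need the boundary walls to persist and $y^{(\alpha)}_{k_\alpha}\neq y^{(\alpha+1)}_0$, since the intermediate basins are unconstrained. Your staircase bookkeeping is also correct.

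The one step that fails as written is your claim that the $2K$ flanking sites are distinct. Distinct half-integer walls can be at distance exactly $1$. Then the right flanking site of one wall equals the left flanking site of the next ($b_l=a_{l+1}$), so \Cref{thm:coalescence-det}, which assumes strictly increasing starting positions, cannot be invoked directly. The formula still holds, but you need a short extra argument.

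Let rows $r$ and $r+1$ of $\tilde{M}$ correspond to the same site.
\begin{itemize}
\item \emph{Inside a pattern.} Column $r+1$ is the $F$-column of the block $\{r,r+1\}$. Since $[r<c]-[r+1<c]=[c=r+1]$, row $r+1$ minus row $r$ is the unit row vector $e_{r+1}$. Hence $\det(\tilde{M})$ equals the principal minor obtained by deleting row and column $r+1$. That minor is exactly the coalescence matrix (the staircase indices shift consistently) for the $2K-1$ distinct sites, in which the corresponding survivor has a one-site basin. So \Cref{thm:coalescence-det} and your trapping argument apply to it.
\item \emph{At a junction between patterns.} Columns $r$ and $r+1$ are both $P$-columns, and no $F$-column has index $r+1$. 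So the two rows coincide and $\det(\tilde{M})=0$. This is correct, because that single site would have to lie in the basins of both $y^{(\alpha)}_{k_\alpha}$ and $y^{(\alpha+1)}_0$, which are distinct survivors.
\end{itemize}
Repeat this for each coincidence. The paper's proofs (here and in \Cref{thm:xy-correlation}) also pass over this degenerate case silently; with it added, your proof is complete.
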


\begin{proof}
Apply the coalescence determinant
(\Cref{thm:coalescence-det}) to the $2K$
flanking positions. Within each pattern, the argument is
identical to \Cref{thm:xy-correlation}. Between
patterns, intermediate particles are trapped between the
last flanking particle of one pattern and the first of
the next, so populating them does not change the
probability.
\end{proof}

\begin{example}[Two separated patterns]
\label{ex:schema-multi-pattern}
Consider $m = 2$ patterns: pattern~$1$ with $k_1 = 2$
(walls~$x_{\nicefrac{1}{2}},
x_{\nicefrac{3}{2}}$, flanking positions
$a_1, b_1, a_2, b_2$, survivors $y_0, y_1, y_2$) and
pattern~$2$ with $k_2 = 1$
(wall~$x_{\nicefrac{5}{2}}$, flanking positions
$a_3, b_3$, survivors $y_3, y_4$), with an unspecified
number of intermediate survivors between $y_2$
and~$y_3$. The $6 \times 6$ matrix is:
\[
\tilde{M} = \left(\!
\setlength{\arraycolsep}{4pt}%
\begin{array}{@{}c||c|cc|c||c|c@{}}
  & y_0 & \multicolumn{2}{c|}{y_1} & y_2
  & y_3 & y_4 \\[0.5em]
\hline\hline
a_1 & P & P & F{-}1 & P & P & P \\[0.3em]
b_1 & P & P & F{-}1 & P & P & P \\[0.3em]
\hline
a_2 & P & P & F & P & P & P \\[0.3em]
b_2 & P & P & F & P & P & P \\[0.3em]
\hline\hline
a_3 & P & P & F & P & P & P \\[0.3em]
b_3 & P & P & F & P & P & P
\end{array}
\!\right).
\]
Here $P$ denotes a transition probability entry
$P(a_l, y_j)$ or $P(b_l, y_j)$, and $F$, $F{-}1$
denote cumulative sum entries $F(\cdot, y_1)$ or
$F(\cdot, y_1) - 1$.

The double lines mark the pattern boundary. Within
pattern~$1$ (upper-left $4 \times 4$ block), the structure
is the familiar $1{+}2{+}1$ from \Cref{ex:schema-1+2+1}:
the interior survivor~$y_1$ has an $F$~column, with the
staircase separating $F{-}1$ (wall~$1$, above) from $F$
(walls~$2$ and~$3$, below). Pattern~$2$ (lower-right
$2 \times 2$ block) is pure Karlin--McGregor.

The off-diagonal blocks couple the two patterns; their
entries are all~$P$. No $F$~column appears between $y_2$
and $y_3$: this absence is what allows an arbitrary number
of unspecified intermediate survivors between the two
patterns. Compare \Cref{fig:matrix-blocks}, where the
single pattern $1{+}2{+}2{+}1$ has $F$~columns for
\emph{every} interior survivor.
\end{example}

\section{Brownian Motion Setting}
\label{sec:bm-setting}

We specialize the block matrix~$\tilde{M}$
(\Cref{cor:block-structure}) to Brownian motion. In
the continuous limit, each flanking pair
$(a_l, b_l)$ collapses to a single wall position;
subtracting the two rows of each block and dividing
by the grid spacing turns the row pair into a
Gaussian density and its spatial derivative. The resulting $2k \times 2k$
matrix~$M_0$ is explicit. We then pass to the
maximal entrance law---coalescing Brownian motions
starting from all of~$\RR$, a classical construction
due to Arratia~\cite{Arratia1979}---and obtain a
determinantal formula for the intensity of the
wall-particle system.

\subsection{Transition densities}
\label{sec:bm-specialization}
\label{sec:scaling-limit}%

Write $p_x(y)$ for the Gaussian transition density at
time~$T$ (fixed throughout):
\[
p_x(y) = \frac{1}{\sqrt{2\pi T}}
\exp\!\left(-\frac{(y - x)^2}{2T}\right),
\]
and $F_x(y)$ for the Gaussian CDF:
\[
F_x(y) = \int_{-\infty}^{y} p_x(z)\, dz
= \Phi\!\left(\frac{y - x}{\sqrt{T}}\right),
\]
where $\Phi$ is the standard normal CDF.

Start coalescing Brownian motions from a grid with
spacing~$\gs$. Each wall is flanked by starting
positions $a_l$ and $b_l = a_l + \gs$. The coalescence
determinant (\Cref{thm:coalescence-det}) gives a
$2k \times 2k$ matrix as in
\Cref{sec:finite-marginals}, with Brownian densities
$p_x(y)$ and CDFs $F_x(y)$ in place of $P(x,y)$
and~$F(x,y)$.

Subtracting the $a_l$ row from the $b_l$ row within
each pair (which preserves the determinant) and dividing
by~$\gs$ replaces the second row by the
derivative $\partial_x$, up to $O(\gs)$ error. Each
$2 \times 2$ interior block becomes:
\[
\begin{pmatrix}
p_{x}(y) & F_{x}(y) - [\cdot] \\[0.3em]
\partial_x p_{x}(y)
& \partial_x F_{x}(y)
\end{pmatrix}
+ O(\gs),
\]
where $[\cdot]$ stands for the Iverson bracket as
in~$B_{i,j}$ (\Cref{sec:block-matrix}). The boundary
columns (containing only~$p$ entries) undergo the same
row operation, becoming
$(p_x(y),\; \partial_x p_x(y))^T$.
Each row-pair contributes one factor of~$\gs$.

\begin{proposition}[Grid refinement]
\label{prop:scaling-limit}
For coalescing Brownian motions starting from a grid with
spacing~$\gs$, the wall-particle correlation function
for $k$ walls near
$x_{\nicefrac{1}{2}}, \ldots, x_{k-\nicefrac{1}{2}}$
and $k + 1$ survivors near $y_0, \ldots, y_k$ equals
$\gs^k \det(M_0) + O(\gs^{k+1})$, where $M_0$ is
the $2k \times 2k$ matrix with alternating density and
derivative rows and the column structure
of~$\tilde{M}$. Each wall contributes one factor
of~$\gs$.
\end{proposition}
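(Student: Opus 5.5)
We apply the wall-particle correlation function (\Cref{thm:xy-correlation}) with Brownian transition densities, and then Taylor-expand in the grid spacing~$\gs$. For coalescing Brownian motions started from the grid $\gs\ZZ$, the skip-free (continuity) argument in the proof of \Cref{thm:xy-correlation} carries over unchanged. Intermediate grid particles are trapped between the flanking paths, so the event ``walls at $x_{\nicefrac12},\dots,x_{k-\nicefrac12}$, survivors in $dy_0,\dots,dy_k$'' coincides with the coalescence event for the $2k$ flanking particles $a_l, b_l=a_l+\gs$ under the pattern $1{+}2{+}\cdots{+}2{+}1$. By the continuous version of \Cref{thm:coalescence-det}, its density in $(y_0,\dots,y_k)$ is $\det\tilde M$. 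Here $\tilde M$ has entries $p_{a_l}(y_j)$, $p_{b_l}(y_j)$ in the $P$ columns and $F_{\cdot}(y_j)-[\,l\le j\,]$-type staircase entries in the $F$ columns. Note that the Iverson bracket is identical for $a_l$ and $b_l$ by \Cref{rem:staircase-blocks}.

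Next we perform the row operation inside each pair: replace row $b_l$ by (row $b_l$ $-$ row $a_l$). This leaves the determinant unchanged. Because the staircase constant is the same in both rows of a pair, it cancels in the difference. The new row therefore has entries $p_{b_l}(y)-p_{a_l}(y)$ and $F_{b_l}(y)-F_{a_l}(y)$. We then pull a factor $\gs$ out of each of the $k$ difference rows, so $\det\tilde M=\gs^k\det M_\gs$. In $M_\gs$ the difference rows are the difference quotients $\gs^{-1}(f(a_l+\gs)-f(a_l))$, and the other rows are unchanged. Taylor's theorem with remainder gives difference quotient $=\partial_x f(a_l)+O(\gs)$, since $x\mapsto p_x(y)$ and $x\mapsto F_x(y)$ are smooth with $\partial_x^2$ bounded uniformly in $y$ for fixed $T>0$. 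Moving $a_l$ to $x_l$ (a shift of at most $\gs$) costs another $O(\gs)$ by the same bounds. Hence $M_\gs=M_0+O(\gs)$ entrywise, where $M_0$ has rows $(p_{x_l},\partial_x p_{x_l})$ and $(F_{x_l}-[\cdot],\partial_xF_{x_l})$ in the respective columns, exactly as displayed before the proposition.

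Finally, the determinant is a polynomial in the entries, and all entries of $M_0$ are bounded (densities are at most $(2\pi T)^{-1/2}$, and CDF entries lie in $[-1,1]$). Therefore $\det M_\gs=\det M_0+O(\gs)$, with a constant depending only on $k$ and $T$. Multiplying by $\gs^k$ gives $\gs^k\det M_0+O(\gs^{k+1})$, with one factor $\gs$ per wall.

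The main obstacle is the first step: legitimately invoking the coalescence determinant and the trapping argument for Brownian motions from a countable grid. One must make sure that adding infinitely many grid particles does not alter the flanking event; this follows from Arratia's construction exactly as in \Cref{thm:xy-correlation}. One must also interpret ``walls near $x_l$'' as the walls sitting at the specific grid midpoints $x_l$, so that the probability mass in $x$ is exactly the factor $\gs^k$. The expansion itself is routine, needing only the uniform derivative bounds.
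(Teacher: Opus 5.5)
Your proposal is correct and is essentially the paper's argument: apply the flanking-particle coalescence determinant with Brownian kernels, subtract the $a_l$ row from the $b_l$ row in each pair (the staircase bracket $[l\le j]$ takes the same value on both rows of a pair and cancels), pull out $\gs^k$, and Taylor-expand the difference quotients. You also make explicit three points that the paper's one-line proof leaves implicit: the uniform bounds on $\partial_x^2 p_x(y)$ and $\partial_x^2 F_x(y)$, the shift from $a_l$ to the wall position $x_l$, and the boundedness of the entries needed to turn entrywise $O(\gs)$ errors into an $O(\gs)$ error in the determinant.
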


\begin{proof}
The row operations above give $\det(\tilde{M})
= \gs^k \det(M_0) + O(\gs^{k+1})$.
\end{proof}

\begin{remark}[Wronskian structure]
\label{rem:wronskian}
For translation-invariant kernels $p_x(y) = p(y - x)$,
\[
\partial_x p_x(y) = -\partial_y p_x(y),
\qquad
\partial_x F_x(y) = -p_x(y).
\]
Thus the derivative row of each block in~$M_0$ is
$-\partial_y$ applied to the density row, giving a
generalized Wronskian in the CDF~$F_x$ evaluated at~$y$.
This structure matches the Tribe--Zaboronski kernel for
coalescing Brownian
motions~\cite{TZ2011,GarrodPTZ2018}; the connection is
explored in~\cite{Sniady2026pfaffian}.
\end{remark}

\subsection{Maximal entrance law}\label{sec:maximal}
\label{sec:limiting-object}%

Arratia~\cite{Arratia1979} constructs coalescing Brownian
motions starting from all of~$\RR$ via dyadic approximation
and proves that the set of survivors is locally finite. We
use the same construction to build the joint
$(\wallseq, \heirseq)$ system.

Fix a time $T > 0$. At step $n = 0, 1, 2, \ldots$,
start independent Brownian motions from every point
of $2^{-n}\ZZ$ at time~$0$, and run them until time~$T$
with the coalescing rule: when two trajectories meet,
they merge and continue as one.

The construction is incremental. Going from step~$n$
to step~$n+1$, the new starting points
$2^{-n-1}(2\ZZ + 1)$ interleave the existing ones from
$2^{-n}\ZZ$. Each newly launched Brownian motion either
hits an existing trajectory before time~$T$ and is
absorbed, or survives to time~$T$ without hitting any
existing trajectory. The set of survivors grows
monotonically with~$n$: existing survivors are never
removed. The limiting set is locally finite: two
Brownian motions distance~$\gs$ apart fail to
coalesce by time~$T$ with probability
$\gs / \sqrt{\pi T} + O(\gs^2)$
(this is Arratia's estimate~\cite{Arratia1979};
it also follows from \Cref{prop:scaling-limit}
with $k = 1$), so the expected number of survivors
per unit length remains bounded as the grid refines.

The construction produces the $(\wallseq, \heirseq)$
system from \Cref{sec:xy-system}: the wall sequence
$\wallseq = (\wallpt_i)$ and the survivor sequence
$\heirseq = (\heirpt_j)$ (positions at time~$T$),
with basins partitioning~$\RR$. The maximal
entrance law inherits translation invariance from the
dyadic grid: the shifts by $2^{-n}$ become dense as
$n \to \infty$, so the joint law of
$(\wallseq, \heirseq)$ is invariant under all
translations of~$\RR$.

\subsection{Wall-particle intensity}
\label{sec:bm-density}

A consecutive pattern with $k$~walls and $k + 1$
survivors lives in $\RR^{2k+1}$ ($k$ wall coordinates
plus $k + 1$ survivor coordinates), and the
wall-particle system under the maximal entrance law
forms a point process on this space. Its
\emph{intensity} is the density of the point
process with respect to Lebesgue measure: integrating
over a region gives the expected number of patterns
in that region. Combining the grid-refinement limit
(\Cref{prop:scaling-limit}) with the dyadic
construction gives this intensity in closed form.

\begin{theorem}[Wall-particle intensity]
\label{thm:bm-density}
Under the maximal entrance law for coalescing Brownian
motions at time $T > 0$, the consecutive pattern
\[
y_0 \nwarrow x_{\nicefrac{1}{2}} \nearrow y_1
  \nwarrow \cdots
  \nwarrow x_{k-\nicefrac{1}{2}} \nearrow y_k
\]
has intensity
\[
\det\bigl(M_0(x_{\nicefrac{1}{2}}, \ldots,
x_{k-\nicefrac{1}{2}};\,
y_0, \ldots, y_k)\bigr)
\]
with respect to Lebesgue measure on~$\RR^{2k+1}$, where
$M_0$ is the $2k \times 2k$ matrix from
\Cref{prop:scaling-limit}.
\end{theorem}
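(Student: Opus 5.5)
The proof runs through the dyadic construction of \Cref{sec:maximal}, with \Cref{prop:scaling-limit} supplying the local probabilities at each grid level and a monotone limit carrying them to the maximal entrance law.

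Fix a bounded Borel box
\[
A = I_{\nicefrac12}\times\cdots\times I_{k-\nicefrac12}\times J_0\times\cdots\times J_k \subset \RR^{2k+1},
\]
with the wall intervals ordered and the survivor intervals ordered. Let $N_n(A)$ be the number of consecutive patterns of the step-$n$ system (grid $\gs = 2^{-n}$) whose walls and survivors fall in $A$, and $N(A)$ the corresponding count for the limiting system. The first step is to compute $\EE N_n(A)$ exactly. On the grid, walls take values in the midpoints $2^{-n}(\ZZ+\tfrac12)$, so
\[
\EE N_n(A) = \sum_{x_i \in I_i \cap 2^{-n}(\ZZ + \frac12)} \int_{J_0\times\cdots\times J_k} \det \tilde M \, dy .
\]
Here $\det\tilde M$ is the density in $y$ given by \Cref{thm:xy-correlation} in its continuous form, with Brownian $p,F$. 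The proof of \Cref{thm:xy-correlation} carries over to the grid $2^{-n}\ZZ$ unchanged, because Brownian paths are continuous and so cannot cross without meeting. By \Cref{prop:scaling-limit}, each summand equals $\gs^k\det M_0 + O(\gs^{k+1})$. Since $x$ and $y$ range over a compact set, the entries of $M_0$ and their $x$-derivatives are bounded there, so the error is uniform. The sum is therefore a Riemann sum with $O(\gs^{-k})$ terms, and
\[
\EE N_n(A) \to \int_A \det M_0 .
\]

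The second step, which I expect to be the main obstacle, is to show $\EE N_n(A)\to \EE N(A)$, that is, that the step-$n$ patterns converge to the limiting patterns. I would argue pathwise. Survivors only accumulate as $n$ grows, and each existing basin is refined by new walls rather than destroyed. Walls of the step-$n$ system are midpoints between consecutive starting points in different basins, and they converge to the limit walls within distance $2^{-n}$. Local finiteness gives a.s.\ only finitely many survivors and walls near the compact set $\bar A$, and the limiting system has a.s.\ no point on $\partial A$, since by the intensity bound below boundaries carry zero expected mass. Together these give $N_n(A)\to N(A)$ almost surely.

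To upgrade to convergence of expectations, I would dominate $N_n(A)$ by a uniformly integrable variable. For fixed $n$, the number of consecutive patterns with survivors in $J_0\times\cdots\times J_k$ is at most the number of survivors in $J_0\cup\cdots\cup J_k$. That count is monotone in $n$ and bounded by the limiting survivor count, whose mean is finite by Arratia's estimate $\gs/\sqrt{\pi T}$ per pair. Dominated convergence then gives $\EE N(A)=\int_A\det M_0$.

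Finally, since boxes form a $\pi$-system generating the Borel sets in the ordered region, and both sides are locally finite measures, they agree everywhere, so $\det M_0$ is the intensity with respect to Lebesgue measure on $\RR^{2k+1}$. Translation invariance from \Cref{sec:maximal} is not needed for this statement; it enters only later, for the gap distributions.
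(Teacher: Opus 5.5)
Your proposal is correct and follows the same route as the paper: dyadic grids of spacing $\gs = 2^{-n}$, \Cref{prop:scaling-limit} read as a Riemann sum with one factor of $\gs$ per wall coordinate, and monotone stabilization of the locally finite survivor set to pass to $n \to \infty$. Your pathwise convergence of pattern counts, domination by the survivor count in $J_0$, and $\pi$-system identification make explicit what the paper compresses into ``passing to $n \to \infty$ gives the result.''

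One step needs a small repair: the finite-mean domination does not by itself give $\EE N(\partial A) = 0$. It suffices to restrict to boxes whose faces avoid the countably many atoms of the coordinate projections of the (locally finite) limiting intensity; these boxes still form a generating $\pi$-system, so the rest of your argument goes through unchanged.
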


\begin{proof}
At dyadic step~$n$, the starting grid has
spacing~$\gs = 2^{-n}$. By \Cref{prop:scaling-limit},
the wall-particle correlation function for $k$ walls
near $x_{\nicefrac{1}{2}}, \ldots,
x_{k-\nicefrac{1}{2}}$ and survivors near
$y_0, \ldots, y_k$ is
$\gs^k \det(M_0) + O(\gs^{k+1})$. Each factor
of~$\gs$ matches the grid spacing per wall, so
$\det(M_0)$ is the density per unit length in each
wall coordinate. Since the survivor set is locally
finite~\cite{Arratia1979} and grows monotonically with
each dyadic refinement, it almost surely stabilizes on
any bounded interval after finitely many steps.
Passing to $n \to \infty$ gives the result.
\end{proof}

\subsection{Example: reflected Brownian motion}
\label{sec:reflected-bm}
\label{ex:reflected-bm}%

The grid-refinement technique of
\Cref{sec:bm-specialization} extends naturally to any
process with continuous paths and a smooth transition
density.
As an illustration beyond standard Brownian motion,
we treat coalescing Brownian motions on the half-line
$[0, \infty)$ with reflection at~$0$. Translation invariance is lost, and the
reflecting boundary introduces a leftmost survivor
with a special role.

Reflected Brownian motion on $[0, \infty)$ has
transition density
\[
p_x(y) = \frac{1}{\sqrt{2\pi T}} \left[
  e^{-(y-x)^2/(2T)} + e^{-(y+x)^2/(2T)}
\right], \quad x, y \geq 0.
\]
This is skip-free (continuous paths), so the coalescence
determinant applies. The maximal entrance law occupies
all of $[0, \infty)$; translation invariance is broken
by the boundary at~$0$.

The set of survivors is half-infinite: there is a
leftmost survivor~$\heirpt_0$, with basin
$[0, \wallpt_{\nicefrac{1}{2}})$ bounded on the left by
the reflecting boundary. We write
\[
\heirpt_0 \nwarrow \wallpt_{\nicefrac{1}{2}}
  \nearrow \heirpt_1
  \nwarrow \wallpt_{\nicefrac{3}{2}}
  \nearrow \heirpt_2
  \nwarrow \cdots
\]
for the wall-particle system, with walls
$0 < \wallpt_{\nicefrac{1}{2}}
< \wallpt_{\nicefrac{3}{2}} < \cdots$ and survivors
$0 < \heirpt_0 < \heirpt_1 < \heirpt_2
< \cdots$.

\begin{theorem}[Half-line intensity]
\label{thm:halfline-intensity}
\label{thm:halfline-correlation}%
Under the maximal entrance law for reflected Brownian
motion on $[0, \infty)$ at time $T > 0$, the pattern
\[
y_0 \nwarrow x_{\nicefrac{1}{2}} \nearrow y_1
  \nwarrow \cdots
  \nwarrow x_{k-\nicefrac{1}{2}} \nearrow y_k
\]
with $y_0$ the leftmost survivor has intensity
\[
\det\bigl(M_0(x_{\nicefrac{1}{2}}, \ldots,
x_{k-\nicefrac{1}{2}};\,
y_0, \ldots, y_k)\bigr)
\]
with respect to Lebesgue measure
on~$(0, \infty)^{2k+1}$. Here $M_0$ is a
$(2k{+}1) \times (2k{+}1)$ matrix: its first row comes
from a particle at the boundary~$0$, and its remaining
$k$ row-pairs are the density and
$\partial_x$-derivative rows from the $k$ walls.
Columns are organized as in the coalescence pattern
$2{+}2{+}\cdots{+}2{+}1$: a $P$-and-$F$ column-pair
for each group of size~$2$, and a single $P$~column for
the final group of size~$1$.
\end{theorem}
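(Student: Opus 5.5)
I will mirror the proof of \Cref{thm:bm-density}: first a discrete correlation formula for a finite grid on the half-line, then grid refinement, then the dyadic limit. The new feature is the leftmost basin $[0,\wallpt_{\nicefrac{1}{2}})$, which contains the boundary point~$0$.

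\textbf{Discrete step.} Start coalescing reflected Brownian motions from the grid $\gs\ZZ_{\ge 0}$. Reflected Brownian motion has continuous paths and satisfies the Karlin--McGregor assumptions, so \Cref{thm:coalescence-det} applies in its continuous form. I keep only the particles at $0, a_1, b_1, \ldots, a_k, b_k$, with $b_l = a_l+\gs$, and impose a coalescence pattern on these $2k+1$ particles:
\begin{itemize}
\item the particle at $0$ merges with $a_1$ into $y_0$;
\item each pair $(b_l, a_{l+1})$ merges into $y_l$ for $1\le l\le k-1$;
\item $b_k$ survives alone at $y_k$.
\end{itemize}
This is the pattern $2{+}2{+}\cdots{+}2{+}1$. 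Its coalescence matrix has one row for the particle at $0$ and $k$ row-pairs. The columns consist of a $P$-and-$F$ pair for each group of size two and a single $P$ column for $y_k$, with the row-level staircase of \Cref{def:coalescence-matrix}.

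Next I repopulate the remaining grid sites, exactly as in the proof of \Cref{thm:xy-correlation}. Intermediate particles between $b_l$ and $a_{l+1}$ are trapped and absorbed into $y_l$. Particles between $0$ and $a_1$ are trapped between the paths started at $0$ and $a_1$, so they are absorbed into $y_0$. No particle lies to the left of $0$, and paths stay in $[0,\infty)$. Hence $y_0$ is the leftmost survivor and its basin is $[0, a_1]$. The event "pattern present with $y_0$ leftmost" therefore coincides with this coalescence event.

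\textbf{Refinement and limit.} Within each flanking pair I subtract the $a_l$ row from the $b_l$ row and divide by $\gs$. By smoothness of the reflected kernel in $x$ on $(0,\infty)$, the $b_l$ row becomes the $\partial_x$ row, up to $O(\gs)$. The row for the particle at $0$ is left unchanged; it is simply $p_0(\cdot)$ and $F_0(\cdot)$ minus the staircase entries. This gives $\det = \gs^k\det(M_0)+O(\gs^{k+1})$, just as in \Cref{prop:scaling-limit}, and $M_0$ is $(2k{+}1)\times(2k{+}1)$.

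The dyadic construction of \Cref{sec:maximal} carries over to the half-line. Survivors again grow monotonically with refinement. Local finiteness follows from the $k=1$ estimate by comparison, or by citing Arratia's argument. The argument in the proof of \Cref{thm:bm-density} then converts $\gs^k$ into Lebesgue density in the wall coordinates. The survivor coordinates are already densities, and there are $k+1$ of them. Together with the $k$ wall coordinates this gives dimension $2k+1$.

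\textbf{Main obstacle.} The delicate point is the boundary row. I need the trapping argument to hold for the particle at $0$ under reflection, in particular the claim that nothing escapes leftward. I also need the staircase placement in the first block to be correct, since that block contains the $0$ row together with the $a_1$ row, so the two rows of the block are not a refined pair. I would check the case $k=0$ first: the formula should give the density of the single leftmost survivor, which is $p_0(y_0)$, and that density is correct because everything eventually coalesces into the particle started at $0$.
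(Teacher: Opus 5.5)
Your proposal is correct and follows the paper's proof essentially step for step. Both use the $2k{+}1$ particles at $0, a_1, b_1, \ldots, a_k, b_k$ with pattern $2{+}2{+}\cdots{+}2{+}1$, trapping of intermediate particles (the reflecting boundary prevents leftward escape), subtraction within each flanking pair to get $\gs^k\det(M_0)+O(\gs^{k+1})$, and the dyadic limit via local finiteness and monotone growth. The staircase concern you flag resolves cleanly: in the $F$-column for $y_0$ the $-1$ sits only in the boundary row, and the step in the $F$-column for $y_l$ falls between $b_l$ and $a_{l+1}$. So both rows of every pair $(a_l,b_l)$ carry the same Iverson bracket and the subtraction yields $\partial_x F$, matching \Cref{ex:halfline-k1}.
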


\begin{proof}
Construct the maximal entrance law on $[0, \infty)$ by
dyadic approximation, as in \Cref{sec:maximal}: at
step~$n$, start reflected Brownian motions from every
point of $\{0, 2^{-n}, 2 \cdot 2^{-n}, \ldots\}$.

At grid spacing $\gs = 2^{-n}$, the particle at~$0$
and the $2k$ flanking particles
$a_1, b_1, \ldots, a_k, b_k$ form a system of
$2k + 1$ particles. The coalescence determinant
(\Cref{thm:coalescence-det}) gives their joint
distribution under the pattern
$2{+}2{+}\cdots{+}2{+}1$: the particle at~$0$
and~$a_1$ merge into $y_0$; each pair
$(b_l, a_{l+1})$ merges into $y_l$; and $b_k$ survives
alone at~$y_k$. Intermediate particles are trapped by
the skip-free property (as in
\Cref{thm:xy-correlation}): between~$0$ and~$a_1$,
the reflecting boundary prevents leftward escape;
between~$b_l$ and~$a_{l+1}$, the closing funnel
absorbs all intermediate particles.

The grid-refinement procedure
(\Cref{prop:scaling-limit}) carries over: each
$(a_l, b_l)$ pair collapses to a density row and a
$\partial_x$-derivative row, contributing one factor
of~$\gs$; the particle at~$0$ contributes a single
row. Thus
\[
\det(\tilde{M})
= \gs^k \det(M_0) + O(\gs^{k+1}).
\]
Since the survivor set is locally
finite~\cite{Arratia1979} and grows monotonically,
it stabilizes on any bounded interval after finitely
many steps. Passing to $n \to \infty$ gives the
intensity~$\det(M_0)$.
\end{proof}

\begin{example}[One wall on the half-line]
\label{ex:halfline-k1}
For $k = 1$ (one wall, two survivors, leftmost survivor
at~$y_0$), the pattern $2{+}1$ gives the
$3 \times 3$ matrix
\[
M_0 = \begin{pmatrix}
p_0(y_0) & F_0(y_0) - 1 & p_0(y_1) \\[0.3em]
p_{x_{\nicefrac{1}{2}}}(y_0)
  & F_{x_{\nicefrac{1}{2}}}(y_0)
  & p_{x_{\nicefrac{1}{2}}}(y_1) \\[0.3em]
\partial_x p_{x_{\nicefrac{1}{2}}}(y_0)
  & \partial_x F_{x_{\nicefrac{1}{2}}}(y_0)
  & \partial_x p_{x_{\nicefrac{1}{2}}}(y_1)
\end{pmatrix},
\]
where $p_x(y)$ and $F_x(y)$ use the reflected Brownian
motion transition density.
\end{example}

\section{Gap Distributions}
\label{sec:gap-distributions}

We compute gap distributions in the discrete setting
(where no limiting procedure is needed) and for Brownian
motion under the maximal entrance law. All results are
stated in terms of the \emph{gap intensity measure}:
$\mu(\{g\})$ is the expected number of gaps of size~$g$
per unit length. Dividing by the total intensity
$\sum_g \mu(\{g\})$ (the survivor density) recovers the
gap probability distribution.

\subsection{Single gap}\label{sec:gaps}

We derive the single-gap intensity measure, first in the discrete
setting (where the formula is exact) and then for Brownian motion
under the maximal entrance law. The Brownian result recovers the
Rayleigh law (see \Cref{sec:intro-results} for prior work); we
give a new proof through the wall-particle system.

\subsubsection{Discrete single-gap distribution}
\label{sec:discrete-single-gap}

Apply the wall-particle system with $k = 1$: a single wall
at half-integer~$x$ separates two basins, with flanking sites
$a = x - \tfrac{1}{2}$ and $b = x + \tfrac{1}{2}$ (so $b = a + 1$).
The two survivors $y_0 < y_1$ satisfy the Karlin--McGregor
non-intersection condition, and the coalescence determinant gives:
\[
\det(\tilde{M}) = P(a, y_0)\, P(b, y_1) - P(a, y_1)\, P(b, y_0).
\]
Summing the wall-particle intensity over all wall
positions gives the probability that $y_0$ and $y_1$
are consecutive survivors:
\begin{equation}\label{eq:discrete-gap-general}
\PP(y_0, y_1 \text{ consecutive survivors})
= \sum_{a \in \ZZ}
  \bigl[P(a, y_0)\, P(a{+}1, y_1)
  - P(a, y_1)\, P(a{+}1, y_0)\bigr].
\end{equation}
This formula is valid for any skip-free process with
every site initially occupied: no translation invariance
or symmetry is needed.

\medskip
\noindent\textbf{Translation-invariant case.}
Assume $P(x, y) = P(0, y - x)$ for all $x, y \in \ZZ$;
we write $P(n) = P(0, n)$ when the time~$T$ is understood.
Writing $g = y_1 - y_0$ for the gap, translation
invariance gives $P(a, y_0) = P(y_0 - a)$,
$P(a{+}1, y_1) = P(y_1 - a - 1)$, and so on; each
summand in~\eqref{eq:discrete-gap-general} depends only
on $g$ and $a - y_0$, so the sum is independent
of~$y_0$. Define the \emph{autocorrelation}
\[
R(m) = \sum_{s \in \ZZ} P_T(s)\, P_T(s + m).
\]
Then the two sums
in~\eqref{eq:discrete-gap-general} are
$R(g - 1)$ and $R(g + 1)$ respectively, giving
\begin{equation}\label{eq:discrete-gap-simplified}
\mu(\{g\}) = R(g - 1) - R(g + 1).
\end{equation}

\medskip
\noindent\textbf{Symmetric case.}
When the walk is \emph{symmetric} ($P(n) = P(-n)$ for
all~$n$), the autocorrelation reduces to a convolution:
$R(m) = \sum_s P(s)\, P(m - s) = P_{2T}(m)$ by the
Chapman--Kolmogorov identity, so
$\mu(\{g\}) = P_{2T}(g - 1) - P_{2T}(g + 1)$.
The continuous-time simple random walk ($\pm 1$ jumps,
each at rate~$1$) is the main example: every integer is
reachable from every integer, so there is no parity
constraint.

\begin{theorem}[Discrete gap intensity measure]
\label{thm:discrete-gap-body}
For a symmetric translation-invariant coalescing skip-free
process on~$\ZZ$ with every site initially occupied, the gap
intensity measure is
\[
\mu(\{g\})
= P_{2T}(g - 1) - P_{2T}(g + 1),
\qquad g = 1, 2, 3, \ldots
\]
The total intensity is a telescoping sum:
\[
\sum_{g=1}^{\infty} \mu(\{g\})
= P_{2T}(0) + P_{2T}(1),
\]
since $P_{2T}(g) \to 0$ as $g \to \infty$; this gives the
survivor density per site. Dividing by the total intensity recovers
the probability mass function
$\PP(G = g) = \mu(\{g\}) / [P_{2T}(0) + P_{2T}(1)]$.
\end{theorem}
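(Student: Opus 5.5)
The plan is to start from the $k = 1$ case of \Cref{thm:xy-correlation}, where the $1{+}1$ pattern makes $\tilde{M}$ the $2\times 2$ Karlin--McGregor matrix (\Cref{ex:schema-1+2+1}). For a wall at half-integer $x$ with flanking sites $a$ and $b = a+1$, this gives the probability that $y_0 \nwarrow x \nearrow y_1$ occurs as
\[
P(a,y_0)P(a{+}1,y_1) - P(a,y_1)P(a{+}1,y_0).
\]
Every pair of consecutive survivors $y_0 < y_1$ is separated by exactly one wall. Hence summing over $a \in \ZZ$ gives the probability that $y_0, y_1$ are consecutive survivors, which is \eqref{eq:discrete-gap-general}.

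Next I would use translation invariance. Substituting $s = y_0 - a$, each of the two sums becomes an autocorrelation $R(m) = \sum_s P_T(s)P_T(s+m)$:
\begin{itemize}
\item the first is $\sum_s P(s)P(s+g-1) = R(g-1)$;
\item the second is $\sum_s P(s+g)P(s-1) = R(g+1)$.
\end{itemize}
The resulting expression does not depend on $y_0$. Fixing $y_0$ to be a given site, the probability that there is a survivor at $y_0$ whose right neighbour is at $y_0+g$ is therefore the expected number of gaps of size $g$ per site, which is $\mu(\{g\})$.

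Symmetry then gives $P(s+m) = P(-s-m)$, so $R(m) = \sum_s P_T(s)P_T(m-s)$. Chapman--Kolmogorov for the translation-invariant Markov kernel turns this into $P_{2T}(m)$, which proves the main formula.

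For the total intensity, the sum over $g = 1, \ldots, N$ telescopes:
\[
\sum_{g=1}^N \bigl[P_{2T}(g-1) - P_{2T}(g+1)\bigr] = P_{2T}(0) + P_{2T}(1) - P_{2T}(N) - P_{2T}(N+1).
\]
The last two terms vanish as $N \to \infty$ because $P_{2T}$ is summable. This limit equals the survivor density per site, since each survivor has exactly one right gap. The gap set is a.s. locally finite, since survivors are a.s. not all coalesced on a finite interval; here I would note that the events "a survivor at $y_0$ with right gap $g$" over $g$ partition the event "a survivor at $y_0$ that is not the rightmost". The probability mass function then follows by normalization.

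The main obstacle is the step identifying the summed wall-particle probability with the consecutive-survivor event. This requires a wall to exist between any two consecutive survivors, which holds because basins partition $\ZZ$ into contiguous blocks. It also requires the probability of having no survivor to the right to be zero, which I would justify via the telescoping total together with translation invariance. A secondary point concerns parity. For walks with a checkerboard structure one applies the result on the sublattice as in \Cref{ex:srw-parity}, and the formula survives because odd-gap terms vanish consistently.
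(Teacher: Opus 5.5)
Your proposal is correct and follows the paper's route. You take the $k=1$ Karlin--McGregor determinant summed over the wall position, use translation invariance to reach $R(g-1)-R(g+1)$, apply symmetry plus Chapman--Kolmogorov to get $P_{2T}$, and telescope; your extra remarks (exactly one wall between consecutive survivors, no rightmost survivor by stationarity) fill in points the paper leaves implicit.

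The only slip is the parity aside, which lies outside the theorem's hypotheses anyway. On the sublattice $2\ZZ$ the flanking sites are $a$ and $a+2$, so the formula becomes $P_{2T}(g-2)-P_{2T}(g+2)$ for even $g$. The stated formula applied literally would put all its mass on odd $g$, since $P_{2T}$ lives on even integers; it is not the odd-gap terms that vanish.
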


\begin{proof}
Equation~\eqref{eq:discrete-gap-simplified} gives
$\mu(\{g\})$. For the total intensity, the partial sum
\[
\sum_{g=1}^{N}
  \bigl[P_{2T}(g - 1) - P_{2T}(g + 1)\bigr]
= P_{2T}(0) + P_{2T}(1) - P_{2T}(N) - P_{2T}(N+1)
\]
telescopes (reindex: the first sum runs over $m = 0, \ldots, N-1$
and the second over $m = 2, \ldots, N+1$). Letting
$N \to \infty$ gives total intensity $P_{2T}(0) + P_{2T}(1)$.
\end{proof}

\subsubsection{Scaling limit preview}
\label{sec:scaling-preview}

Under diffusive scaling, the discrete gap distribution from
\Cref{thm:discrete-gap-body} recovers the Brownian motion Rayleigh
density from \Cref{thm:rayleigh-intro}. The scaling sends:
\begin{itemize}
\item lattice spacing $\gs \to 0$;
\item discrete gap $g \in \ZZ$ to continuous gap
  $G = g \cdot \gs$ (measured in the rescaled coordinates);
\item discrete time $t$ to continuous time with
  $\gs^2 t$ held fixed.
\end{itemize}
In this regime, the transition probability
$P_{2t}(n)$ at the integer $n = (g \pm 1)$ is well approximated by
the Gaussian density $p(n\gs,\, 2\gs^2 t)$ times the lattice
spacing $\gs$. The difference at unit spacing becomes a
derivative:
\[
P_{2t}(g - 1) - P_{2t}(g + 1)
\approx -2\gs^2 \,\partial_G\, p(G,\, 2T)
\bigg|_{G = g\gs,\, T = \gs^2 t},
\]
and since $\partial_G\, p(G, 2T)
= -\frac{G}{2T}\, p(G, 2T)$, this gives
\[
\mu(dG) \propto G \, e^{-G^2/(4T)}\, dG,
\]
the gap intensity measure, identifying the
$\mathrm{Rayleigh}(\sqrt{2})$ family.

This argument is a scaling heuristic, not a rigorous proof: a
complete derivation requires controlling the error terms in the
Gaussian approximation and the convergence of the normalizing
constants. The rigorous Brownian motion proof follows.

\subsection{Brownian motion single gap}

We prove \Cref{thm:rayleigh-intro} by applying the $(\wallseq, \heirseq)$
framework from \Cref{sec:xy-system} (with the Brownian motion
specialization from \Cref{sec:bm-specialization}) with $k = 1$
and passing to the maximal entrance law (\Cref{sec:maximal}).

We recall the \emph{Rayleigh distribution} with scale parameter
$\sigma > 0$:
\[
\mathrm{Rayleigh}(\sigma) : \quad
f_\sigma(G) = \frac{G}{\sigma^2} e^{-G^2/(2\sigma^2)}, \quad G > 0.
\]
The mean is $\sigma \sqrt{\pi/2}$ and the variance is
$(4 - \pi)\sigma^2/2$.

\begin{theorem}[Gap intensity measure under the maximal entrance law]
\label{thm:gap-measure}
Under the maximal entrance law, the gap intensity measure in
rescaled coordinates has density
\[
\mu(dG) = \frac{G}{2\sqrt{\pi}}\, e^{-G^2/4}\, dG, \quad G > 0.
\]
Normalizing to a probability distribution gives the
$\mathrm{Rayleigh}(\sqrt{2})$ density
$f(G) = \frac{G}{2}\, e^{-G^2/4}$.
\end{theorem}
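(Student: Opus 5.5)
The plan is to apply \Cref{thm:bm-density} with $k = 1$ and then integrate out the single wall coordinate. For one wall there are no interior survivors, so $M_0$ is the $2\times 2$ Karlin--McGregor-type matrix obtained by row-reducing
\[
\tilde{M} = \begin{pmatrix} p_a(y_0) & p_a(y_1) \\ p_b(y_0) & p_b(y_1)\end{pmatrix},
\qquad b = a + \gs .
\]
This gives
\[
\det M_0 = p_x(y_0)\,\partial_x p_x(y_1) - p_x(y_1)\,\partial_x p_x(y_0)
\]
at wall position $x$. By \Cref{thm:bm-density}, this is the intensity of the pattern $y_0 \nwarrow x \nearrow y_1$ on $\RR^3$. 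Integrating over $x \in \RR$ gives the intensity of consecutive survivor pairs at $(y_0,y_1)$, i.e.\ the continuous analogue of \eqref{eq:discrete-gap-general}. Interchanging the integral with the identification of intensities needs justification; I would use monotone convergence, since the integrand counts patterns and is nonnegative as an intensity.

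Next, using translation invariance, $\partial_x p_x(y) = -\partial_y p_x(y) = \frac{y-x}{T}p_x(y)$. The integrand then becomes
\[
\frac{(y_1 - x) - (y_0 - x)}{T}\,p_x(y_0)p_x(y_1) = \frac{g}{T}\,p_x(y_0)p_x(y_1),
\qquad g = y_1 - y_0 > 0 .
\]
It is manifestly positive, which also confirms the orientation. The $x$-integral is the Gaussian autocorrelation $\int p_x(y_0)p_x(y_1)\,dx = p^{(2T)}(g) = \frac{1}{\sqrt{4\pi T}}e^{-g^2/(4T)}$, by Chapman--Kolmogorov and symmetry, exactly as in the discrete symmetric case. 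Hence the pair intensity is $\frac{g}{T\sqrt{4\pi T}}e^{-g^2/(4T)}$. It depends only on $g$, as translation invariance of the maximal entrance law (\Cref{sec:maximal}) predicts.

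Finally, I would convert to the gap intensity per unit length: fix $y_0$ in a unit window, so $\mu(dg)$ is the displayed density in $g$. Then rescale $G = g/\sqrt{T}$, with $dg = \sqrt T\,dG$, and keep "per unit length" in rescaled units, which contributes another factor $\sqrt T$. This yields $\frac{G}{2\sqrt\pi}e^{-G^2/4}\,dG$. The total mass is $\frac{1}{2\sqrt\pi}\int_0^\infty G e^{-G^2/4}\,dG = \frac{1}{\sqrt\pi}$, consistent with Arratia's density $1/\sqrt{\pi T}$. Dividing by it gives $\frac G2 e^{-G^2/4}$, which is $f_{\sqrt2}$.

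I expect the main obstacle to be bookkeeping rather than analysis. Specifically, I must make sure that integrating the wall coordinate counts each consecutive survivor pair exactly once: each gap corresponds to exactly one wall, by the basin partition. I must also track the powers of $\sqrt T$ correctly when passing to rescaled coordinates. The limit interchange itself is covered by \Cref{thm:bm-density}.
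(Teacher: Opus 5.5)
Your proposal is correct and follows the paper's proof essentially step for step: apply \Cref{thm:bm-density} with $k=1$, use $\partial_x p_x(y) = \frac{y-x}{T}\,p_x(y)$ to reduce $\det M_0$ to $\frac{g}{T}\,p_x(y_0)\,p_x(y_1)$, integrate out the wall position, invoke translation invariance, and rescale by $\sqrt{T}$ in both the gap and the per-unit-length normalization. The only cosmetic difference is in the wall integral: you evaluate it via Chapman--Kolmogorov as the variance-$2T$ Gaussian density, while the paper completes the square directly; both give $\frac{g}{2\sqrt{\pi}\,T^{3/2}}\,e^{-g^2/(4T)}$.
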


\begin{corollary}[Density of surviving particles]
\label{cor:density}
The total intensity is
$\int_0^\infty \mu(dG) = 1/\sqrt{\pi}$, giving the rescaled
survivor density. Since the rescaled mean gap is
$\EE[G] = \sqrt{\pi}$, the mean gap between consecutive
survivors at time $T$ is $\sqrt{\pi T}$, and the density of
survivors per unit length is $1/\sqrt{\pi T}$.
\end{corollary}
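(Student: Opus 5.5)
The plan is to read everything off from \Cref{thm:gap-measure}, using one elementary integral and one bookkeeping identity that links gaps to survivors.

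\textbf{Total intensity.} First compute the total intensity:
\[
\int_0^\infty \frac{G}{2\sqrt{\pi}}\, e^{-G^2/4}\, dG
= \frac{1}{2\sqrt{\pi}}\Bigl[-2e^{-G^2/4}\Bigr]_0^\infty
= \frac{1}{\sqrt{\pi}}.
\]
Next, identify this number with the rescaled survivor density. Under the maximal entrance law the survivor set is locally finite~\cite{Arratia1979}, and it is translation invariant (\Cref{sec:maximal}). Each survivor $\heirpt_j$ is the left endpoint of exactly one gap, namely $(\heirpt_j,\heirpt_{j+1})$. So the expected number of gaps with left endpoint in a window equals the expected number of survivors in that window. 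Dividing by the window length shows that the total gap intensity is the survivor intensity.

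A consistency check is available from the wall side. Each survivor owns exactly one wall, its left basin boundary, so survivor density equals wall density. In the continuum limit the wall density is the probability $\gs/\sqrt{\pi T}+O(\gs^2)$ that two neighbours at distance~$\gs$ fail to coalesce, divided by~$\gs$. This is the $k=1$ case of \Cref{prop:scaling-limit}, and it gives $1/\sqrt{\pi T}$, in agreement with the computation above.

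\textbf{Mean gap.} The normalized law is $\mathrm{Rayleigh}(\sqrt2)$, whose mean is $\sigma\sqrt{\pi/2}=\sqrt2\cdot\sqrt{\pi/2}=\sqrt{\pi}$. This is also the reciprocal of the density, as it must be for a stationary locally finite point process: the mean spacing under the Palm distribution equals $1/\text{intensity}$. So the two statements are consistent.

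\textbf{Un-rescaling.} Rescaled coordinates divide lengths by $\sqrt T$, with $G=\text{gap}/\sqrt T$. The mean gap in original units is therefore $\sqrt{\pi}\cdot\sqrt T=\sqrt{\pi T}$. A density of $1/\sqrt\pi$ per unit rescaled length becomes $1/\sqrt{\pi T}$ per unit original length.

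\textbf{Main obstacle.} The integrals are routine. The only delicate point is justifying that ``expected number of gaps per unit length'' coincides with the survivor intensity, and that normalization produces the Palm mean gap. I will argue this through translation invariance together with the bijection between survivors and the gaps to their right, plus local finiteness so that all expectations are finite. If needed, I will pass to the limit along the dyadic construction using monotone convergence.
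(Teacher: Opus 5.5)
Your proposal is correct and follows the paper's route. You integrate the density from \Cref{thm:gap-measure} to get $1/\sqrt{\pi}$, read off the $\mathrm{Rayleigh}(\sqrt{2})$ mean $\sqrt{\pi}$, and un-rescale by $\sqrt{T}$; your additional justifications (survivors in bijection with the gaps to their right, the Palm relation $\EE[G] = 1/\text{intensity}$, and the cross-check via the $k=1$ case of \Cref{prop:scaling-limit}) are sound and only make explicit what the paper leaves implicit.
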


\begin{proof}[Proof of \Cref{thm:gap-measure}]
By \Cref{thm:bm-density} with $k = 1$, the intensity of the
pattern $y_0 \nwarrow x \nearrow y_1$ is $\det(M_0)$, where
\[
M_0 = \begin{pmatrix}
p_x(y_0) & p_x(y_1) \\
\partial_x p_x(y_0) & \partial_x p_x(y_1)
\end{pmatrix}.
\]
Using $\partial_x p_x(y) = p_x(y) \cdot (y - x)/T$, the
determinant is:
\[
\det(M_0) = p_x(y_0)\, p_x(y_1) \cdot \frac{y_1 - y_0}{T}.
\]

Change coordinates: let $u = y_0 - x$ (displacement of left
survivor from wall position) and $G = y_1 - y_0$ (gap). Then
$p_x(y_0) = (2\pi T)^{-1/2} e^{-u^2/(2T)}$ and
$p_x(y_1) = (2\pi T)^{-1/2} e^{-(u + G)^2/(2T)}$, so
\[
\det(M_0) = \frac{G}{2\pi T^2}
  \exp\!\left(-\frac{u^2 + (u+G)^2}{2T}\right).
\]
Completing the square: $u^2 + (u+G)^2 = 2(u + G/2)^2 + G^2/2$.
Integrating over $u$:
\[
\int_{-\infty}^{\infty} \det(M_0)\, du
= \frac{G}{2\pi T^2}\, e^{-G^2/(4T)} \sqrt{\pi T}
= \frac{G}{2\sqrt{\pi}\, T^{3/2}}\, e^{-G^2/(4T)}.
\]
By translation invariance, this is independent of the
location~$y_0$, so it gives the gap intensity per unit
(unrescaled) length. Changing to rescaled coordinates
$G = (y_1 - y_0)/\sqrt{T}$, the gap intensity per unit
rescaled length becomes
\[
\mu(dG) = \frac{G}{2\sqrt{\pi}}\, e^{-G^2/4}\, dG,
\]
which is $1/\sqrt{\pi}$ times the
$\mathrm{Rayleigh}(\sqrt{2})$ density.
\end{proof}

\subsection{Joint distribution of consecutive gaps}\label{sec:joint}

We derive the joint intensity of two consecutive gaps, first in
the discrete setting and then for Brownian motion, proving
\Cref{thm:joint-intro}. The Brownian
joint density was previously obtained by ben-Avraham and
Brunet~\cite{benAvrahamBrunet2005} from the IPDF hierarchy;
our derivation via the wall-particle system applies to any
skip-free process, including discrete random walks and
birth-death chains.

\subsubsection{Discrete joint gaps}

For $k = 2$ consecutive gaps, the $1{+}2{+}1$ pattern from
\Cref{ex:schema-1+2+1} gives a $4 \times 4$ determinant.
As in the single-gap case
(\Cref{sec:discrete-single-gap}), summing the wall-particle
intensity over all wall positions gives the probability
that $y_0$, $y_1$, $y_2$ are consecutive survivors:
\[
\PP(y_0, y_1, y_2 \text{ consecutive survivors})
= \sum_{a_1, a_2 \in \ZZ}
  \det(\tilde{M}),
\]
where $a_l, b_l = a_l + 1$ are the flanking sites of
wall~$l$, and $\tilde{M}$ is the $4 \times 4$ matrix
\[
\tilde{M} = \left(
\begin{array}{c|cc|c}
P(a_1, y_0) & P(a_1, y_1) & F(a_1, y_1) - 1
  & P(a_1, y_2) \\
P(b_1, y_0) & P(b_1, y_1) & F(b_1, y_1) - 1
  & P(b_1, y_2) \\
\hline
P(a_2, y_0) & P(a_2, y_1) & F(a_2, y_1)
  & P(a_2, y_2) \\
P(b_2, y_0) & P(b_2, y_1) & F(b_2, y_1)
  & P(b_2, y_2)
\end{array}
\right).
\]
By translation invariance, the sum depends only on the
gaps $g_1 = y_1 - y_0$ and $g_2 = y_2 - y_1$; in the
Brownian limit the sums over wall positions become the
integrals over $u$ and $v$ in
\Cref{thm:joint-gap} below.

\subsubsection{Brownian motion joint gaps}

We apply the
$(\wallseq, \heirseq)$ framework from \Cref{sec:xy-system}
with $k = 2$: two walls produce three survivors at
positions $y_0 < y_1 < y_2$, with consecutive gaps
$G_1 = y_1 - y_0$ and $G_2 = y_2 - y_1$.

\begin{theorem}[Joint gap intensity and negative correlation]
\label{thm:joint-gap}
The joint gap intensity for $(G_1, G_2)$ is
\[
h(G_1, G_2) = \iint_{u < v}
  \det M_0(u, v;\, G_1, G_2)\, du\, dv,
\]
where $u, v$ are the rescaled positions of the two walls
(the constraint $u < v$ is the wall ordering). The three
survivors are placed at rescaled positions $0$, $G_1$,
$G_1 + G_2$ (translation invariance fixes the leftmost
survivor at the origin). Write $S = G_1 + G_2$ for the total
gap. The $4 \times 4$ matrix~$M_0$ is:
\[
M_0 = \left(
\begin{array}{c|cc|c}
\phi(u) & \phi(G_1{-}u) & \Phi(G_1{-}u) - 1
  & \phi(S{-}u) \\[0.3em]
-u\,\phi(u) & (G_1{-}u)\,\phi(G_1{-}u) & -\phi(G_1{-}u)
  & (S{-}u)\,\phi(S{-}u) \\[0.3em]
\hline
\phi(v) & \phi(G_1{-}v) & \Phi(G_1{-}v)
  & \phi(S{-}v) \\[0.3em]
-v\,\phi(v) & (G_1{-}v)\,\phi(G_1{-}v) & -\phi(G_1{-}v)
  & (S{-}v)\,\phi(S{-}v)
\end{array}
\right),
\]
where $\phi$ is the standard normal density and $\Phi$ its
CDF. The block structure is $(2{+}2) \times (1{+}2{+}1)$
(\Cref{ex:schema-1+2+1,fig:matrix-blocks}); the even rows
are the source derivatives from the grid refinement of
\Cref{prop:scaling-limit}.

The marginal gap distributions are each
$\mathrm{Rayleigh}(\sqrt{2})$, but
the joint intensity does not factorize. The gaps are
negatively correlated, with $\rho \approx -0.163$.
\end{theorem}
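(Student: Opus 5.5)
The plan is to specialize \Cref{thm:bm-density} to $k=2$ and then integrate out the two wall coordinates, following the proof of \Cref{thm:gap-measure}.

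\textbf{Step 1: rescale and write $M_0$ explicitly.} Rescale space by $\sqrt{T}$, so that $p_x(y)=T^{-1/2}\phi((y-x)/\sqrt T)$ and $F_x(y)=\Phi((y-x)/\sqrt T)$. Use translation invariance (\Cref{sec:maximal}) to place $y_0$ at the origin, with the walls at $u<v$. In the derivative rows, apply the Wronskian relations of \Cref{rem:wronskian}: $\partial_x F_x(y)=-p_x(y)$, and $\partial_x p_x(y)$ equals $(y-x)p_x(y)$ in rescaled units. This gives exactly the displayed matrix, with the $F-1$ entries in the $u$ row-pair and the plain $F$ entries in the $v$ row-pair, as dictated by the staircase of \Cref{ex:schema-1+2+1}.

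\textbf{Step 2: identify $h$ as a pattern intensity.} The expected number of triples of consecutive survivors with gaps in $dG_1\,dG_2$, per unit length, is the pattern intensity integrated over all wall positions. There is exactly one pair of walls between consecutive survivors, so integrating $\det M_0$ over $u<v$ gives $h$. The powers of $T$ from rescaling are absorbed exactly as in the single-gap proof.

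\textbf{Step 3: marginals.} Integrating $h$ over $G_2$ must give the single-gap intensity $\frac{G_1}{2\sqrt\pi}e^{-G_1^2/4}$. Rather than computing this directly, argue probabilistically: $\int h\,dG_2$ counts consecutive pairs $(y_0,y_1)$, since a right neighbour $y_2$ always exists, the survivor set being infinite and locally finite. This is the same quantity computed in \Cref{thm:gap-measure}. By reflection symmetry of Brownian motion, the same holds for the $G_2$ marginal. The one thing to check is that exchanging the integrals is justified; this follows from Gaussian decay of every entry of $M_0$.

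\textbf{Step 4: negative correlation (the main obstacle).} Compute $\EE[G_1G_2]$ under the normalized density $\sqrt{\pi}\,h$ and compare it with $\EE[G]^2=\pi$. Expanding the determinant along the $\Phi$ column, every term is a product of Gaussians in $u$ and $v$ times polynomials, multiplied by $\Phi(G_1-u)-1$ or $\Phi(G_1-v)$. I would try the following to reduce the dimension:
\begin{itemize}
\item Use the ordering $u<v$ together with integration by parts in $u$ and $v$; the $\Phi$ column is an antiderivative of the $\phi$ column, which should turn $h$ into a two-dimensional Gaussian integral involving $\Phi$ of linear forms.
\item Then evaluate the moment $\iint G_1G_2\,h\,dG_1\,dG_2$ via standard identities such as $\int\phi(a)\Phi(\alpha a+\beta)\,da=\Phi(\beta/\sqrt{1+\alpha^2})$ and their moment analogues.
\end{itemize}
This should produce $\EE[G_1G_2]$ in closed form, likely involving $\sqrt3$ or arctangents. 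Then $\rho=(\EE[G_1G_2]-\pi)/((4-\pi)\cdot2)$, using the Rayleigh$(\sqrt2)$ variance $(4-\pi)$. If the closed form resists, I would fall back on numerical quadrature of the explicit integral to certify $\rho\approx-0.163<0$. Non-factorization then follows immediately, since factorization would force $\rho=0$.
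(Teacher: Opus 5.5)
Your Steps 1--2 are the paper's proof. You specialize \Cref{thm:bm-density} to $k=2$, pin $y_0=0$ by translation invariance, fill the even rows via $\partial_x\phi(y-x)=(y-x)\phi(y-x)$ and $\partial_x\Phi(y-x)=-\phi(y-x)$, and integrate over $u<v$. The paper's proof stops there. It asserts the Rayleigh marginals without argument, and it obtains $\rho\approx-0.163$ purely by numerical integration, stating explicitly that a closed form for $\rho$ and an analytic proof of $\rho<0$ remain open.

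So your Step~3 is a valid addition. One correction: exchanging integrals does not follow from ``Gaussian decay of every entry,'' since the $\Phi$ entries do not decay. Tonelli applies anyway, because $\det M_0\ge 0$ as a limit of probabilities. Also, the same counting argument with a left neighbour gives the $G_2$ marginal without invoking reflection. By contrast, your integration-by-parts programme in Step~4 is unsupported: in both the paper and your plan, the claim about $\rho$ rests on the numerical fallback.

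One concrete error: the normalization in Step~4 is wrong. For $\mathrm{Rayleigh}(\sqrt2)$ the variance is $(4-\pi)\sigma^2/2=4-\pi$, as you yourself say, so $\rho=(\EE[G_1G_2]-\pi)/(4-\pi)$. Dividing by $2(4-\pi)$ halves the answer, giving roughly $-0.082$ instead of the stated $-0.163$.

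Deducing non-factorization from $\rho\neq0$ is logically fine, but it makes that claim as numerical as $\rho$ itself. It can be proved directly.
\begin{itemize}
\item By your Step~3, any product form must equal $\sqrt\pi\,\mu(G_1)\mu(G_2)$, where $\mu(G)=\tfrac{G}{2\sqrt\pi}e^{-G^2/4}$. This behaves like $G_1G_2/(4\sqrt\pi)$ near the origin.
\item Now subtract the $y_0$ column from the $y_1$ and $y_2$ density columns and Taylor expand in the target variable. The order-$G_1S$ term contains two equal columns (both $\partial_y$ of the $y_0$ column), so it vanishes.
\item Every term is dominated by Gaussians in $u$ and $v$, so the expansion survives integration, giving $h(G,G)=O(G^3)$ as $G\to0$.
\end{itemize}
This contradicts the $G^2$ behaviour of the product form, so $h$ does not factorize.
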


\noindent\textbf{General structure for $k$ gaps.}
The joint intensity $h(G_1, \ldots, G_k)$ of $k$ consecutive
gaps is obtained by integrating a $2k \times 2k$ determinant
with the same block structure as in
\Cref{sec:finite-marginals,fig:matrix-blocks}: $k$ row-pairs
(density and derivative) and $k{+}1$ density columns plus
$k{-}1$ CDF columns.
The case $k = 2$ above has $3$ density columns and $1$ CDF
column. See \Cref{rem:higher-order} for the general case.

\begin{proof}
By \Cref{thm:bm-density}, the intensity of the consecutive
pattern $y_0 \nwarrow x_1 \nearrow y_1 \nwarrow x_2 \nearrow y_2$
is $\det(M_0)$. Fix rescaled coordinates: place the three
survivors at $0$, $G_1$, $G_1 + G_2$ (translation invariance
removes the location variable), and let $u, v$ be the rescaled
wall positions. The Gaussian density
$\phi(y - x)$ and CDF $\Phi(y - x)$ give the matrix entries
directly; the source derivative
$\partial_x \phi(y - x) = (y - x)\,\phi(y - x)$ and
$\partial_x \Phi(y - x) = -\phi(y - x)$ supply the even rows
(\Cref{rem:wronskian}). Integrating $\det(M_0)$ over the
wall positions with the ordering constraint $u < v$ yields
the joint gap intensity.
\end{proof}

The determinant does not factorize as
$f(G_1) \cdot g(G_2)$ because the wall variables $u, v$
couple the two gaps: each wall position interacts with all
three survivors. Numerical integration gives
$\rho \approx -0.163$ (to three decimal places). An
analytical proof that $\rho < 0$ remains open, as does a
closed-form expression for~$\rho$.

See \Cref{fig:joint-density}.

\begin{figure}[t]
\centering
\begin{tikzpicture}
\begin{axis}[
    view={0}{90},
    width=8cm,
    height=8cm,
    xlabel={$G_1$},
    ylabel={$G_2$},
    xmin=0, xmax=2.5,
    ymin=0, ymax=2.5,
    xtick={0, 0.5, 1.0, 1.5, 2.0, 2.5},
    ytick={0, 0.5, 1.0, 1.5, 2.0, 2.5},
    tick label style={font=\small},
    label style={font=\normalsize},
    set layers,
    grid=major,
    grid style={line width=0.2pt, draw=gray!25},
    colormap/OrRd-9,
    point meta min=-0.3,
    point meta max=0.55,
]
\addplot3[
    contour gnuplot={
        levels={0.05, 0.1, 0.15, 0.2, 0.25, 0.3,
                0.35, 0.4, 0.45, 0.5},
        labels=false,
    },
    thick,
    mesh/rows=56,
    mesh/ordering=rowwise,
] table {figures/data/joint_density_surface.dat};
\addplot3[
    contour gnuplot={
        levels={0.1, 0.2, 0.3, 0.4, 0.5},
        labels=true,
        contour label style={font=\scriptsize},
        label distance=150pt,
    },
    thick,
    mesh/rows=56,
    mesh/ordering=rowwise,
] table {figures/data/joint_density_surface.dat};
\end{axis}
\end{tikzpicture}
\caption{Joint gap intensity $h(G_1, G_2)$ from
  \Cref{thm:joint-gap}, computed by numerical integration.
  The tilted elliptical contours reflect negative correlation
  ($\rho \approx -0.163$).}
\label{fig:joint-density}
\end{figure}
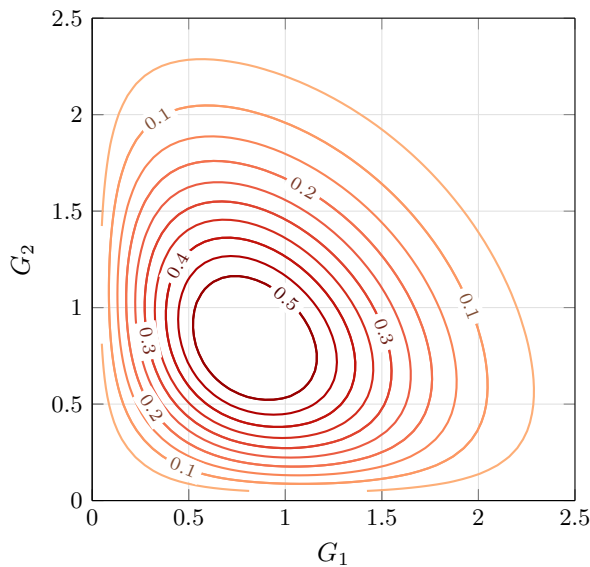

\begin{remark}[Higher-order gap distributions]
\label{rem:higher-order}
The joint intensity of $k$ consecutive gaps $(G_1, \ldots, G_k)$
follows from the $1{+}2{+}\cdots{+}2{+}1$ pattern
(\Cref{sec:xy-system}) with $2k$ particles in $k$ pairs. By
\Cref{prop:scaling-limit}, the $2k \times 2k$ determinant scales as
$\gs^k \det(M_0)$, where $M_0$ has alternating rows of transition
densities and spatial derivatives. This determines the joint
intensity $h(G_1, \ldots, G_k)$.

Preliminary numerical computations
suggest that non-adjacent gaps also have negative correlations,
though weaker than for adjacent gaps.
\end{remark}

\section{Warren's Formula}\label{sec:warren}

We now derive the determinantal CDF formula described
in \Cref{sec:intro-method} for all skip-free processes
from the coalescence determinant.

The setting is the following. Start $n$
particles at fixed positions $x_1 < \cdots < x_n$; each
performs an independent skip-free process until it meets
another particle, at which point the two coalesce and
continue as one. At time~$T$, some of the original $n$
particles have merged, so the number of distinct positions is
at most~$n$. Warren's formula gives the joint CDF of these
positions.

We use the notation $P(x, y)$ and $F(x, y)$ from
\Cref{sec:coalescence-det}. We write $\sum_z$ for
summation over the state space~$S$; in the continuous
case, all sums become integrals.

\begin{theorem}[Warren's formula for skip-free processes]
\label{thm:warren-general}
Let $X^{(1)}, \ldots, X^{(n)}$ be coalescing skip-free processes on
a linearly ordered state space $S$, starting at positions
$x_1 < x_2 < \cdots < x_n$. Let $Z_T(x_i)$ denote the position at
time $T$ of the particle that started at $x_i$
(after possible coalescence).

For $y_1 \leq y_2 \leq \cdots \leq y_n$ in $S$:
\[
\PP\bigl(Z_T(x_i) \leq y_i \text{ for all } i = 1, \ldots, n\bigr)
= \det(M^W),
\]
where the $n \times n$ matrix $M^W$ has entries:
\[
M^W_{ij} = F(x_i, y_j) - [i < j].
\]
\end{theorem}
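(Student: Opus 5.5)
The plan is to decompose the event by coalescence pattern and sum. The collection of final positions $(Z_T(x_1), \ldots, Z_T(x_n))$ is weakly increasing, and it is determined by two things: a coalescence pattern $n_1 + \cdots + n_k = n$ (a composition, $2^{n-1}$ choices) and strictly increasing survivor positions $w_1 < \cdots < w_k$. Within the $l$th block, every particle sits at $w_l$. So the event $\{Z_T(x_i) \le y_i \ \forall i\}$ reduces, for each pattern, to the condition $w_l \le y_{m_l}$, where $m_l$ is the \emph{first} index of block $l$ (since $y$ is nondecreasing, the first index gives the binding constraint). By \Cref{thm:coalescence-det}, the probability is therefore
\[
\sum_{\text{patterns}} \ \sum_{\substack{w_1 < \cdots < w_k\\ w_l \le y_{m_l}}} \det \tilde{M}^{(\text{pattern})}(w_1,\ldots,w_k).
\]
The goal is to show this equals $\det(M^W)$. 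Here $\tilde M$ must use the row-level staircase $[i<j]$ of \Cref{def:coalescence-matrix}, as \Cref{rem:staircase-blocks} warns.

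To prove the identity, I would work from the right-hand side and expand $\det(M^W)$ column by column using a summation-by-parts identity. For each column $j$, write
\[
F(x_i, y_j) - [i<j] = \sum_{z \le y_j} P(x_i,z) - [i<j].
\]
Multilinearity lets us expand the determinant as sums over variables $z_1, \ldots, z_n$, one per column, with $z_j \le y_j$. The key manipulation is to split these sums according to the order relations between consecutive $z$'s. The terms with $z_j \ge z_{j-1}$ (or $z_j > z_{j-1}$) are kept in the $P$ form. For the terms with $z_j \le z_{j-1}$, column $j$ is re-summed into the cumulative form $F(x_i, z_{j-1}) - [i<j]$, attached to the survivor at $z_{j-1}$. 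In other words, I would iteratively rewrite each column $j \ge 2$ as
\[
F(x_i,y_j)-[i<j] = \bigl(F(x_i,y_j) - F(x_i,z_{j-1})\bigr) + \bigl(F(x_i,z_{j-1}) - [i<j]\bigr).
\]
The first piece is a sum of $P(x_i,z_j)$ over $z_{j-1} < z_j \le y_j$: this means "column $j$ starts a new block at a strictly larger position." The second piece is the $F$-column with staircase: this means "column $j$ joins the block of $j-1$." Choosing one of the two pieces for each $j=2,\ldots,n$ gives $2^{n-1}$ terms. Each term corresponds to a composition, with strictly increasing block positions $w_l = z_{m_l}$ constrained by $w_l \le y_{m_l}$.

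The subtlety is that the $F(x_i, z_{j-1})$ replacement refers to $z_{j-1}$, which is itself a summation variable only if $j-1$ starts a block. If $j-1$ is already an $F$-column, we need to refer back to the block's first position $w_l$. This can be arranged because subtracting multiples of earlier columns does not change the determinant. Concretely, when column $j-1$ was already replaced by $F(\cdot,w_l) - [i<j-1]$, the second piece for column $j$ should be $F(x_i,w_l) - [i<j]$, and the first piece becomes $F(x_i,y_j) - F(x_i,w_l)$, a sum of $P(x_i,z)$ over $w_l < z \le y_j$. Organizing the induction on $j$ with this "current block position" is the main bookkeeping. The cleanest route is induction on $n$, expanding only the last column and invoking the statement for the first $n-1$ columns inside the multilinear expansion. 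Equivalently, one proceeds column by column, since each split is linear in a single column.

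The main obstacle is checking that the resulting $F$-columns carry exactly the row-level staircase $[i<j]$ of \Cref{def:coalescence-matrix} and not a block-level one. The first pieces are genuinely pure $P$-columns with the $-[i<j]$ terms cancelling: $(F(x_i,y_j)-[i<j]) - (F(x_i,w_l)-[i<j])$. This cancellation is exactly why the staircase must be indexed by the column $j$ itself. With that verified, each of the $2^{n-1}$ terms is $\sum_{w} \det\tilde M^{(\text{pattern})}$ over the correct range, and the identity follows. In the continuous case the same argument applies with sums replaced by integrals, and ties $w_l = w_{l+1}$ have measure zero.
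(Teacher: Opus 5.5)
Your proposal is correct and is essentially the paper's proof run in reverse. The paper starts from the sum over coalescence patterns and collapses columns from right to left: for each last index it pairs the pattern in which that index starts a new block (summed over $(z_r, y_n]$) with the pattern in which it joins the previous block. That pairing is exactly your split $F(x_i,y_j)-[i<j] = \bigl(F(x_i,y_j)-F(x_i,w_l)\bigr) + \bigl(F(x_i,w_l)-[i<j]\bigr)$ read as a merge instead of an expansion. Your appeal to column operations when column $j-1$ is already an $F$-column is unnecessary: the split with the current block position $w_l$ is a plain identity of column vectors, and the range $w_l < z \le y_j$ is legitimate because $w_l \le y_{m_l} \le y_j$.
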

\begin{proof}
The plan is: sum over all coalescence patterns
$n = n_1 + \cdots + n_k$, and for each pattern
sum the survivor positions $z_1, \ldots, z_k$
over the region compatible with the CDF thresholds
$y_1, \ldots, y_n$. We then show that the resulting sum
of determinants collapses to~$\det(M^W)$.

By \Cref{thm:coalescence-det}, composition
$n_1 + \cdots + n_k = n$ produces an $n \times n$
coalescence matrix~$\tilde{M}$ whose determinant gives
the joint density of the $k$ survivor positions. All
particles in block~$l$ share the same survivor~$z_l$, so
the constraint $z_l \leq y_i$ for every $i$ in the block
reduces to $z_l \leq y_{j_l}$, where
$j_l = n_1 + \cdots + n_{l-1} + 1$ is the first index
in the block (giving the smallest threshold). The CDF
decomposes as
\[
\PP\bigl(Z_T(x_i) \leq y_i \;\forall\, i\bigr)
= \sum_{n_1 + \cdots + n_k = n}\;
  \sum_{\substack{z_1 < \cdots < z_k \\[2pt]
                  z_l \leq y_{j_l}}}
  \det \tilde{M}(z_1, \ldots, z_k),
\]
a sum over all $2^{n-1}$ compositions of~$n$.

We evaluate this sum by collapsing one column at a time,
from right to left.

\medskip\noindent\textbf{Summing out the rightmost
survivor.}
We sum out the survivor variable that feeds into
column~$n$, grouping patterns in pairs. For each
composition $m_1 + \cdots + m_r = n{-}1$, define two
compositions of~$n$:
\begin{itemize}
\item Pattern~A: $m_1 + \cdots + m_r + 1$ (particle~$n$ in
  a new block);
\item Pattern~B: $m_1 + \cdots + (m_r{+}1)$ (particle~$n$
  appended to the last block).
\end{itemize}
This pairs the $2^{n-1}$ compositions of~$n$ into
$2^{n-2}$ pairs.

Columns $1, \ldots, n{-}1$ of the coalescence matrix are
identical for paired patterns~A and~B (same block sizes,
same survivors): extending the last block adds column~$n$
but does not alter the existing columns.

In Pattern~A, the survivor~$z_{r+1}$ of the new
block~$\{n\}$ appears only in column~$n$ ($P$-type).
Summing over $z_{r+1} \in (z_r,\, y_n]$ replaces this
column by the vector
\[
\bigl(F(x_i, y_n) - F(x_i, z_r)\bigr)_i.
\]
By multilinearity of the determinant in column~$n$,
Pattern~A contributes
\[
\det\bigl(\cdots,\; F_{\cdot}(y_n)\bigr)
  - \det\bigl(\cdots,\; F_{\cdot}(z_r)\bigr),
\]
where~$\cdots$ denotes columns $1, \ldots, n{-}1$
and $F_{\cdot}(y)$ is the column $(F(x_i, y))_{i=1}^n$.

In Pattern~B, column~$n$ is $F(x_i, z_r) - [i < n]$
(an $F$-column with staircase, since~$n$ is not the first
index in its block). By multilinearity:
\[
\det\bigl(\cdots,\; F_{\cdot}(z_r)\bigr)
  - \det\bigl(\cdots,\; [i < n]\bigr).
\]

Adding: the $F_{\cdot}(z_r)$ terms cancel, leaving
\[
\det\bigl(\cdots,\; F_{\cdot}(y_n) - [i < n]\bigr).
\]
Column~$n$ is now $M^W_{\cdot,n} = F(x_i, y_n) - [i < n]$,
independent of all survivor variables.

The rightmost
survivor has been summed out: column~$n$ is in its final
Warren form, and the remaining sum runs over the
$2^{n-2}$ compositions of~$n{-}1$ and their survivors.

\medskip\noindent\textbf{Iteration.}
Repeating the pairing at boundary~$(n{-}2, n{-}1)$
collapses column~$n{-}1$ to
$F(x_i, y_{n-1}) - [i < n{-}1]$, reducing to $2^{n-3}$
compositions of~$n{-}2$. After $n - 1$ iterations every
column is in Warren form. The final step sums the sole
remaining survivor~$z_1$ over $(-\infty, y_1]$, giving
column~$1 = F_{\cdot}(y_1)$ (with $[i < 1] = 0$). The
result is~$\det(M^W)$.
\end{proof}

\Cref{thm:warren-general} applies to all the skip-free
processes discussed in earlier sections, including
birth-death chains (\Cref{ex:birth-death}) and reflected
Brownian motion (\Cref{ex:reflected-bm}).

\section*{Acknowledgments}

We thank Theodoros Assiotis, Bal\'azs B\'ar\'any,
Maciej Dołęga, Sho Matsumoto, B\'alint T\'oth,
\'Akos Urb\'an, Oleg Zaboronski, and
Karol Życzkowski for stimulating discussions and
helpful literature suggestions.

We thank Richard Arratia for generously providing
access to his PhD thesis~\cite{Arratia1979}.

We are grateful to Folkmar Bornemann for sharing
his MATLAB toolbox for computing distributions in
random matrix theory~\cite{Bornemann2010}. This
software was instrumental in our preliminary
numerical experiments.

Claude Code (Anthropic) was used as an assistant during formula
discovery and manuscript preparation.

\printbibliography

\end{document}